\newtheorem {thm}{Theorem}[section]
\newtheorem{lem}[thm]{Lemma}
\newtheorem{prop}[thm]{Proposition}
\newtheorem{cor}[thm]{Corollary}
\newtheorem{defn}[thm]{Definition}
\newtheorem{ex}[thm]{Example}
\newtheorem{rem}[thm]{Remark}
\begin{document}

\title{Minus Partial Order in Regular modules}

\author{B. Ungor}
\address{Burcu Ungor, Department of Mathematics, Ankara University, Ankara, Turkey} \email{bungor@science.ankara.edu.tr}
\author{S. Halicioglu}
\address{Sait Hal\i c\i oglu, Department of Mathematics, Ankara University, Ankara, Turkey} \email{halici@ankara.edu.tr}
\author{A. Harmanci}
\address{Abdullah Harmanci, Department of Mathematics, Hacettepe University, Ankara, Turkey} \email{harmanci@hacettepe.edu.tr}
\author{J. Marovt}
\address{Janko Marovt, Faculty of Economics and Business, University of Maribor,
Razlagova 14, SI-2000 Maribor, Slovenia}
\email{janko.marovt@um.si}

\keywords{Minus partial order,
regular module} \subjclass[2010]{06A06, 06F25, 06F99, 16B99}
\date{}

\begin{abstract} The minus partial order is already known for sets of
matrices over a field and bounded linear operators on
arbitrary Hilbert spaces. Recently, this partial order has
been studied on Rickart rings. In this paper, we extend the concept of the minus
relation to the module theoretic setting and prove that this
relation is a partial order when the module is regular. Moreover,
various characterizations of the minus partial order in regular
modules are presented and some well-known results are also
generalized.
\end{abstract}
\maketitle

\section{Introduction}

Let $\mathcal{S}$ be a semigroup and
$a\in\mathcal{S}$. Any solution $x=a^{-}$ to the equation $axa=a$
is called \textit{an inner generalized inverse }of $a$. If such
$a^{-}$ exists, then $a$ is called \textit{regular,} and if every
element in a semigroup $\mathcal{S}$ is regular, then
$\mathcal{S}$ is called \textit{a regular semigroup}. Hartwig
\cite{Hartwig} introduced \textit{the minus partial order} $\leq^{-}$ on
regular semigroups using generalized inverses. For a regular
semigroup $\mathcal{S}$ and $a,b\in\mathcal{S}$, we write
\begin{equation}
a\leq^{-}b\quad\text{if\quad}a^{-}a=a^{-}b\quad\text{and\quad}aa^{-}=ba^{-}
\label{HartwigMinusDef}%
\end{equation}
for some inner generalized inverse $a^{-}$ of $a$.

Let $B(\mathcal{H})$ be
the algebra of all bounded linear operators on a Hilbert space $\mathcal{H}$. For
an operator $A\in B(\mathcal{H})$, the symbols $\mathrm{Ker}$ $A$
and $\mathrm{Im}$ $A$  will denote the kernel and the image of $A$,
respectively. It is known that $A\in B(\mathcal{H})$ is regular if and only if
$\operatorname{Im}\,A=\overline {\operatorname{Im}\,A}$, i.e. the image of $A$ is
closed (see for example \cite{Nashed}).
\v{S}emrl studied in \cite{Semrl} the minus partial order on
$B(\mathcal{H})$. He did not want to restrict himself only to
operators on $B(\mathcal{H})$ with closed images so he defined a
new order $\leq_{S}$ on $B(\mathcal{H})$ in the following way: For
$A,B\in B(\mathcal{H})$ we write $A\leq_{S}B$ if there exist
idempotent operators $P,Q\in B(\mathcal{H})$ such that
$\operatorname{Im}\,P=\overline {\operatorname{Im}\,A}$,
$\mathrm{Ker}\,A=\mathrm{Ker}\,Q$, $PA=PB$, and
$AQ=BQ$. \v{S}emrl called this order the minus partial order on $B(\mathcal{H}%
)$ and proved that this is indeed a partial order on
$B(\mathcal{H})$ for a general Hilbert space $\mathcal{H}$. He
also showed that the partial order $\leq_{S}$ is the same as
Hartwig's minus partial order $\leq^{-}$ when $\mathcal{H}$ is
finite dimensional.

For a subset $\mathcal{A}$ of a ring $R$, $l_{R}(\mathcal{A})$ and $r_{R}(\mathcal{A})$ will
denote the left annihilator and the right annihilator of $\mathcal{A}$ in
$R$, respectively. If the subset $\mathcal{A}$ is a singleton, say
$\mathcal{A}=\{a\}$, then again we simply write $l_{R}(a)$ and $r_{R}(a)$,
respectively. A ring $R$ is called a Rickart ring if for every $a\in R$
there exist idempotent elements $p,q\in R$ such that $r_{R}(a)=p\cdot R$
and $l_{R}(a)=R\cdot q$. Note that every
Rickart ring $R$ has the (multiplicative) identity and that the class of
Rickart rings includes von Neumann algebras and rings with no proper zero divisors
 (see \cite{Baer}
or \cite{K}).

Following \v{S}emrl's approach, the authors further generalized in
\cite{DjordjevicRakicMarovt} the minus partial order to Rickart
rings. A new relation was introduced on a ring with identity: Let
$R$ be a ring with the identity $1_R$ and $a,b\in R$. Then we
write $a\leq^{-}b$ if there exist idempotent elements $p,q\in R$
such that
\begin{equation}
l_{R}(a)=R(1_R-p),  \quad r_{R}(a)=(1_R-q)R, \quad pa=pb, \quad \text{and} \quad aq=bq.
\label{def_minus_Rickart}
\end{equation}
It was proved that this relation $\leq^{-}$ is indeed a partial
order when $R$ is a Rickart ring and that definitions
(\ref{HartwigMinusDef}) and (\ref{def_minus_Rickart}) are
equivalent when $R$ is a ring in which every element is regular,
i.e. $R$ is a \textit{von Neumann regular ring}.

In this paper, we will study the minus partial order in a more
general setting. The goal of this paper is to extend the notion of
the minus partial order to modules using their endomorphism rings.
We show that this minus relation is a partial order when the
module is  regular. We also present various characterizations of
the minus partial order in regular modules and generalize some
well-known results.

Throughout this paper $R$ denotes an associative ring
with identity $1_R$ and modules are unitary right $R$-modules. For a
right $R$-module $M_R=M$, $S =$ End$_R(M)$ is the ring of all right $R$-module
endomorphisms of $M$. It is well known that $M$ is a left $S$ and
right $R$-bimodule. In this work, for the $(S, R)$-bimodule $M$,
$l_S(.)$ and $r_R(.)$ stand for the left annihilator of a subset
of $M$ in $S$ and the right annihilator of a subset of $M$ in $R$,
respectively. If the subset is a singleton, say $\{m\}$, then we
simply write $l_{S}(m)$ and $r_{R}(m)$, respectively.

\section{Minus partial order in modules}

Let $M$ be a right $R$-module with $S =$ End$_R(M)$. For the sake
of brevity, in the sequel, $S$ will stand for the endomorphism
ring of the module $M$ considered. We will denote the identity map
in $S$ by $1_S$. An element $m \in M$ is called ({\it
Zelmanowitz}) {\it regular} if $$m=m\varphi (m) \equiv
\hspace{0,1cm} m\varphi m$$ for some $\varphi\in M^*$ where $M^*
=$ Hom$_R(M, R)$ denotes the dual of $M$.

For a ring $R$, let $a\in R$ be a regular element (in the sense of
von Neumann) so that there exists $b \in R$ such that $a=aba$.
Define the map $\varphi:R\rightarrow R$ with $\varphi (r)=br, r\in
R$. Then $\varphi \in R^* =$ End$_R(R)$. We have $$a\varphi
a=a\varphi(a)=aba=a$$ which yields that $a$ is regular in $R_R$
(in the sense of Zelmanowitz, see also \cite{ZEL}).

Let now $R$ be a ring with identity and suppose $a$ is a regular element in $R_R$ (in the sense of Zelmanowitz).
Then there exists  $\varphi\in R^* $ such that $a=a\varphi a$.
Define $a^-=\varphi(1_R)\in R$. Then  $$a=a\varphi (a)=a\varphi(1_Ra)=a\varphi(1_R)a=aa^-a.$$
We may conclude that $a \in R$ is regular if and only if $a$ is regular in
$R_R$ (or, similarly, in the left-$R$ module $_RR$). A module $M$ is called \textit{regular} (in the
sense of Zelmanowitz) if every element of $M$ is
regular.

\begin{rem}\label{decomposition} \cite[Lemma B.47]{NY} {\rm Let $M$ be a module and $m\in M$ regular,
say $m=m\varphi m$ where $\varphi\in M^*$. Then $e=\varphi m\in R$
is an idempotent, $mR\cong eR$ is projective, and $M=mR\oplus N$
where $N=\{n\in M \mid m\varphi n=0\}$. }\end{rem}

\begin{rem}\label{burcu} {\rm Let $M$ be a module. It is known that Hom$_R(R,M)\cong M$.
Let $m\in M$ be regular, say $m=m\varphi m$ where $\varphi\in
M^*$. Then for the map $m\varphi:M\rightarrow M$ defined by $$(m\varphi)(x)=m\varphi (x) \equiv m\varphi x, \quad x\in M$$
we may conclude that $m\varphi \in S$ and that $m \varphi$ is an idempotent in $S$.
}\end{rem}

With the following definition we will introduce the concept of
minus order in the setting of modules.

\begin{defn}\label{ilk} {\rm Let $M$ be a module and $m_1, m_2\in M$.
We write $m_1\leq^{-} m_2$
if there exists $\varphi \in M^*$ such that $m_1=m_1\varphi m_1$,
$m_1\varphi=m_2 \varphi$, and $\varphi m_1=\varphi m_2$. We call
the relation $\leq^{-}$ the \textit{minus order} on $M$.}
\end{defn}

We will prove that when the module $M$ is regular, the relation $\leq^{-}$ is a partial order. First, let us present an auxiliary result and a new characterization of the minus order.

\begin{prop}\label{subset} Let $M$ be a module and $m_1, m_2\in M$.
If $m_1\leq^{-} m_2$, then $m_1R\subseteq m_2R$.
\end{prop}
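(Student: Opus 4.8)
The plan is to reduce the statement to the single membership $m_1 \in m_2 R$. Indeed, $m_2 R$ is a right $R$-submodule of $M$, so once we know $m_1 \in m_2 R$ we immediately obtain $m_1 R \subseteq m_2 R$. Thus everything comes down to writing $m_1$ as $m_2 r$ for some $r \in R$.

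To produce such an $r$, I would exploit the first two defining relations of $m_1 \leq^{-} m_2$, namely $m_1 = m_1 \varphi m_1$ and $m_1 \varphi = m_2 \varphi$ in $S$. By Remark~\ref{burcu} the symbol $m_1\varphi$ denotes the endomorphism $x \mapsto m_1\varphi(x)$ lying in $S$, and $m_1\varphi m_1$ is its value at $m_1$, i.e. $m_1\varphi m_1 = (m_1\varphi)(m_1)$. Substituting $m_1\varphi = m_2\varphi$ into $m_1 = (m_1\varphi)(m_1)$ gives
\[
m_1 = (m_1\varphi)(m_1) = (m_2\varphi)(m_1) = m_2\,\varphi(m_1),
\]
and since $\varphi(m_1) = \varphi m_1 \in R$ (cf.\ Remark~\ref{decomposition}), this is exactly the desired expression $m_1 = m_2 r$ with $r = \varphi m_1$. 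Multiplying on the right by an arbitrary $s \in R$ then yields $m_1 s = m_2(\varphi m_1)s \in m_2 R$, so $m_1 R \subseteq m_2 R$.

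I do not expect any genuine obstacle here; the only point requiring care is the bookkeeping of juxtaposition --- keeping straight which products land in $R$, which in $S$, and which in $M$ --- and in particular parsing $m_1\varphi m_1$ as $(m_1\varphi)(m_1)$ so that the hypothesis $m_1\varphi = m_2\varphi$ can be inserted. It is worth noting that the argument uses neither the third relation $\varphi m_1 = \varphi m_2$ nor regularity of the ambient module $M$: the two relations $m_1 = m_1\varphi m_1$ and $m_1\varphi = m_2\varphi$ already force $m_1 R \subseteq m_2 R$.
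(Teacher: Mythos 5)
Your proof is correct and essentially identical to the paper's: both derive $m_1=m_1\varphi m_1=m_2\varphi m_1=m_2(\varphi m_1)\in m_2R$ from $m_1\varphi=m_2\varphi$ and then conclude $m_1R\subseteq m_2R$ since $m_2R$ is a submodule. Your observation that the third relation $\varphi m_1=\varphi m_2$ is not needed is accurate --- the paper's final rewriting of $m_2\varphi m_1$ as $m_2\varphi m_2$ is cosmetic.
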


\begin{proof} Assume that $m_1\leq^{-} m_2$. Then there exists $\varphi \in M^*$
such that $m_1=m_1\varphi m_1$,
$m_1\varphi=m_2 \varphi$, and $\varphi m_1=\varphi m_2$. Hence
$m_1=m_1\varphi m_1=m_2\varphi m_1=m_2\varphi m_2 \in m_2R$ and thus
$m_1R\subseteq m_2R$.
\end{proof}

\begin{thm}\label{main} Let $M$ be a module and $m_1, m_2\in M$
with $m_1$ regular. Then $m_1\leq^{-} m_2$ if and only if there
exist $f^2=f\in S$, $a^2=a\in R$ such that $l_S(m_1)=l_S(f)$,
$r_R(m_1)=r_R(a)$, $fm_1=fm_2$, and $m_1a=m_2a$.
\end{thm}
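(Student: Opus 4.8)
The strategy is to prove both implications using the idempotents naturally produced by a Zelmanowitz-regularity witness. For the forward direction, suppose $m_1 \leq^- m_2$ with witness $\varphi \in M^*$, so $m_1 = m_1\varphi m_1$, $m_1\varphi = m_2\varphi$, and $\varphi m_1 = \varphi m_2$. The candidates are $f = m_1\varphi \in S$ (an idempotent in $S$ by Remark~\ref{burcu}) and $a = \varphi m_1 = \varphi(m_1) \in R$ (an idempotent in $R$ by Remark~\ref{decomposition}). Then $fm_1 = m_1\varphi m_1 = m_1 = m_1\varphi m_2 = fm_2$ using $\varphi m_1 = \varphi m_2$, and similarly $m_1 a = m_1\varphi m_1 = m_1 = m_2\varphi m_1 = m_2 a$ using $m_1\varphi = m_2\varphi$; so the two equational conditions hold. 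The annihilator conditions need a short argument: if $g m_1 = 0$ for $g \in S$ then $g f = g m_1 \varphi = 0$, so $l_S(m_1) \subseteq l_S(f)$, and conversely if $gf = 0$ then $g m_1 = g (m_1 \varphi m_1) = g f m_1 = 0$, giving $l_S(m_1) = l_S(f)$. The identical bookkeeping on the right, using $m_1 = m_1 (\varphi m_1) = m_1 a$, yields $r_R(m_1) = r_R(a)$.

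For the converse, assume the existence of idempotents $f \in S$, $a \in R$ satisfying the four conditions. Since $m_1$ is regular, fix $\psi \in M^*$ with $m_1 = m_1\psi m_1$; write $e = \psi m_1 \in R$ (idempotent) and $g = m_1\psi \in S$ (idempotent). The first task is to upgrade the annihilator equalities into factorizations: from $l_S(m_1) = l_S(f)$ and the idempotence of $f$ and $g$ I would show $m_1 = f m_1$, since $(1_S - f) \in l_S(f) = l_S(m_1)$ gives $m_1 = f m_1$; dually, $r_R(m_1) = r_R(a)$ and idempotence of $a$ give $m_1 = m_1 a$. Combined with $fm_1 = fm_2$ and $m_1 a = m_2 a$ this yields $m_1 = f m_2$ and $m_1 = m_2 a$. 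The remaining task is to manufacture a single $\varphi \in M^*$ that simultaneously witnesses regularity of $m_1$ and satisfies $m_1 \varphi = m_2 \varphi$ and $\varphi m_1 = \varphi m_2$.

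**The main obstacle.** Producing that one unifying $\varphi$ is the crux. The natural attempt is $\varphi = a \psi f$ (i.e. $\varphi(x) = a \cdot \psi(f x)$), or a similar composite built from $\psi$, $f$, and $a$; one then checks $m_1 \varphi m_1 = m_1 a \psi f m_1 = m_1 \psi m_1 = m_1$ using $m_1 a = m_1$ and $f m_1 = m_1$, and checks that $\varphi m_1 = a \psi f m_1 = a \psi m_1$ equals $\varphi m_2 = a \psi f m_2 = a \psi m_1$ (using $f m_1 = f m_2$), and dually on the other side that $m_1 \varphi = m_1 a \psi f$ equals $m_2 \varphi = m_2 a \psi f = m_1 \psi f$ (using $m_2 a = m_1$). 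The delicate point is whether $\varphi$ as an honest element of $M^*$, rather than merely of $S$, can be formed: $a \in R$ acts on the \emph{values} of $\psi$ and $f \in S$ precomposes, so $\varphi = a\psi f$ is indeed $R$-linear as a map $M \to R$, and the verifications reduce to the equations $f m_1 = m_1 = m_1 a$, $f m_1 = f m_2$, $m_1 a = m_2 a$ already in hand. I expect the argument to go through, with the only real care being to track left- versus right-module structures so that all four defining identities of Definition~\ref{ilk} come out exactly, and to confirm each composite genuinely lands in $M^*$ or $S$ as claimed.
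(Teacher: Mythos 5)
Your proposal is correct and follows essentially the same route as the paper: the forward direction uses the idempotents $f=m_1\varphi$ and $a=\varphi m_1$ with the same annihilator bookkeeping, and the converse builds the single witness $\beta=a\varphi f\in M^*$ (your $a\psi f$), exactly as in the paper's proof; your worry about $a\psi f$ landing in $M^*$ is unfounded since it is a composite of right $R$-module maps.
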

\begin{proof} Let $m_1\leq^{-} m_2$. Then there exists $\varphi \in M^*$ such that $m_1=m_1\varphi m_1$,
$m_1\varphi=m_2 \varphi$, and $\varphi m_1=\varphi m_2$. Let
$f=m_1\varphi$ and $a=\varphi m_1$. Then $f^2=f\in S$ and
$a^2=a\in R$. Clearly, $l_S(m_1)\subseteq l_S(f)$. From
$m_1=m_1\varphi m_1$ we obtain $(1_S-m_1\varphi)m_1=0$. So,
$l_S(f)=S(1_S-m_1\varphi)\subseteq l_S(m_1)$. Hence
$l_S(m_1)=l_S(f)$. Obviously, $r_R(m_1)\subseteq r_R(a)$.
Similarly, since $m_1=m_1\varphi m_1$, $m_1(1_R-\varphi m_1)=0$, it follows
$r_R(a)=(1_R-\varphi m_1)R\subseteq r_R(m_1)$. Thus
$r_R(m_1)=r_R(a)$. Also, $fm_1=m_1\varphi
m_1=m_1\varphi m_2=fm_2$ and $m_1a=m_1\varphi m_1=m_2\varphi m_1
=m_2a$.

Conversely, assume that there exist $f^2=f\in S$, $a^2=a\in R$
such that $l_S(m_1)=l_S(f)$, $r_R(m_1)=r_R(a)$, $fm_1=fm_2$, and
$m_1a=m_2a$. Also, $m_1=m_1\varphi m_1 $ for some $\varphi\in
M^*$. Then $m_1=fm_1=m_1a$. Let $\beta=a\varphi f\in M^*$. Hence
$m_1\beta m_1 =m_1a\varphi  fm_1 =m_1\varphi m_1=m_1$. Also, we
have $m_1\beta=m_1a\varphi f=m_2a\varphi f=m_2 \beta$ and $\beta
m_1 =a\varphi  fm_1=a\varphi fm_2=\beta  m_2$. Therefore
$m_1\leq^{-} m_2$.
\end{proof}

We are now in position to prove that the minus order $\leq^{-}$ is indeed
a partial order when the module $M$ is regular.

\begin{thm}\label{ant} Let $M$ be a regular module.
Then the relation  $\leq^{-} $ is a partial order on $M$.
\end{thm}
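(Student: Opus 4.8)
The plan is to verify the three defining properties of a partial order---reflexivity, antisymmetry, and transitivity---for the relation $\leq^{-}$ on a regular module $M$. Reflexivity is immediate: since $M$ is regular, every $m\in M$ satisfies $m=m\varphi m$ for some $\varphi\in M^*$, and then trivially $m\varphi=m\varphi$ and $\varphi m=\varphi m$, so $m\leq^{-}m$. For the remaining two properties I would work primarily through the characterization in Theorem~\ref{main}, which is available since all elements of a regular module are regular.

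For transitivity, suppose $m_1\leq^{-}m_2$ and $m_2\leq^{-}m_3$. From the first relation pick $\varphi\in M^*$ with $m_1=m_1\varphi m_1$, $m_1\varphi=m_2\varphi$, $\varphi m_1=\varphi m_2$; from the second pick $\psi\in M^*$ with $m_2=m_2\psi m_2$, $m_2\psi=m_3\psi$, $\psi m_2=\psi m_3$. The natural candidate witnessing $m_1\leq^{-}m_3$ is $\varphi$ itself (or a suitable modification such as $\beta=(\varphi m_1)\psi(m_1\varphi)$, mimicking the construction in the proof of Theorem~\ref{main}). I expect to show $m_1\varphi=m_2\varphi$ and, using $m_1R\subseteq m_2R$ from Proposition~\ref{subset} together with $m_2\psi=m_3\psi$, deduce $m_1\psi=m_3\psi$-type identities; the key will be to transfer the equalities $m_2\psi=m_3\psi$ and $\psi m_2=\psi m_3$ down to $m_1$ by writing $m_1=m_2 r$ for some $r\in R$ (and dually $m_1=s m_2$ for some $s\in S$, which follows from $l_S(m_1)=l_S(m_2\varphi)$ and $m_1=m_1\varphi m_1=m_2\varphi m_1$, so $m_1 = (m_2\varphi)m_1$ with $m_2\varphi\in S$). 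Combining these, one checks that the composite endomorphism/homomorphism data satisfies the idempotent-annihilator conditions of Theorem~\ref{main} with respect to $m_1$ and $m_3$, giving $m_1\leq^{-}m_3$.

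For antisymmetry, suppose $m_1\leq^{-}m_2$ and $m_2\leq^{-}m_1$. By Proposition~\ref{subset} we get $m_1R\subseteq m_2R$ and $m_2R\subseteq m_1R$, hence $m_1R=m_2R$. Now from $m_1\leq^{-}m_2$ there is $\varphi$ with $m_1=m_1\varphi m_1=m_2\varphi m_1=m_2\varphi m_2$ (using $\varphi m_1=\varphi m_2$); I want to upgrade $m_1R=m_2R$ to $m_1=m_2$. Write $m_2=m_1 r$ for some $r\in R$; then $m_1\varphi m_2 = m_1\varphi m_1 r = m_1 r = m_2$, but also $m_1\varphi m_2 = m_1 \varphi m_1 = m_1$ since $\varphi m_1 = \varphi m_2$, so $m_1 = m_2$.

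The main obstacle I anticipate is transitivity: unlike antisymmetry, it does not reduce cleanly to a cyclic-inclusion argument, and one must produce a single $\beta\in M^*$ (or a single pair of idempotents) that simultaneously witnesses the three conditions relating $m_1$ and $m_3$. The delicate point is propagating the ``$m_2,m_3$'' equalities to ``$m_1,m_3$'' equalities, for which I would lean on the two-sided divisibility $m_1=s m_2=m_2 r$ extracted above together with the regularity of $m_1$; keeping careful track of which identities hold on which side (left action of $S$ versus right action of $R$) is where the bookkeeping is most error-prone, so I would route the verification through the clean idempotent-and-annihilator formulation of Theorem~\ref{main} rather than manipulating the raw Definition~\ref{ilk} conditions directly.
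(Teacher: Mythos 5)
Your reflexivity and antisymmetry arguments are sound. In fact your antisymmetry proof is slightly cleaner than the paper's: the paper routes through the decomposition $M=m_1R\oplus N_1$ of Remark \ref{decomposition}, whereas you only need $m_2\in m_1R$ (from Proposition \ref{subset} applied to $m_2\leq^{-}m_1$) together with the two computations $m_1\varphi m_2=m_1\varphi m_1r=m_1r=m_2$ and $m_1\varphi m_2=m_1\varphi m_1=m_1$. That part is complete and correct.

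Transitivity, however, is a genuine gap, and it is precisely the part of the theorem that carries all the work. Neither of your proposed witnesses can be made to work. Taking $\beta=\varphi$ fails because the hypotheses give $m_1\varphi=m_2\varphi$ but say nothing about $m_3\varphi$; the only information tying $m_3$ to anything is through $\psi$. Taking $\beta=(\varphi m_1)\psi(m_1\varphi)=a_1\psi f_1$ does give $m_1\beta m_1=m_1\psi m_1=m_1$ (this one can be checked using $m_1=f_1m_2=m_2a_1$ and $m_2=m_2\psi m_2$), but the condition $m_1\beta=m_3\beta$ requires comparing $m_1a_1\psi f_1$ with $m_3a_1\psi f_1=m_3(\varphi m_1)\psi f_1$, and nothing in the hypotheses controls $m_3\varphi m_1$; the two-sided divisibility relations $m_1=sm_2=m_2r$ you extract only let you replace $m_1$ by expressions in $m_2$, not propagate the $\psi$-identities past the interposed idempotents $a_1,f_1$. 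The missing idea is the construction of a single pair of idempotents that works: the paper sets $f=f_1+f_1f_2(1_S-f_1)\in S$ and $a=a_1+(1_R-a_1)a_2a_1\in R$ (where $f_i,a_i$ are the idempotents furnished by Theorem \ref{main} for the two given relations), and then verifies $l_S(f)=l_S(m_1)$, $r_R(a)=r_R(m_1)$, $fm_1=fm_3$, $m_1a=m_3a$ — the last two hinging on the identities $f_1(1_S-f_2)f_1=0$ and $a_1(1_R-a_2)a_1=0$, which in turn are deduced from the annihilator conditions. Saying that ``one checks that the composite data satisfies the conditions of Theorem \ref{main}'' does not substitute for exhibiting this $f$ and $a$; without them the proof of transitivity is not there.
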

\begin{proof} \textit{Reflexivity}: Obvious.\\
\textit{Antisymmetry}: Let $m_1, m_2\in M$ with $m_1\leq^{-} m_2$
and $m_2\leq^{-} m_1$. Then there exist $\varphi, \beta \in M^*$
such that
$$m_1=m_1\varphi m_1,
m_1\varphi=m_2 \varphi, \varphi m_1=\varphi m_2 $$ and
$$m_2=m_2\beta m_2,
m_2\beta=m_1 \beta, \beta m_2=\beta m_1.$$ By Proposition
\ref{subset}, $m_1R=m_2R$.  Remark \ref{decomposition} yields that
$M=m_1R\oplus N_1$ where $N_1=\{n\in M\mid m_1\varphi n=0\}$.
Since $m_1\varphi(m_1-m_2)=0$, $m_1-m_2\in N_1$, say $m_1-m_2=n\in
N_1$. Hence $m_1=n+m_2$, and also $m_2=m_1r$ for some $r\in R$.
 Thus $m_1=n+m_1r$, so $n=m_1-m_1r\in N_1\cap m_1R=\{0\}$.
 This implies that $m_1=m_1r$. Therefore $m_1=m_2$.\\
\textit{Transitivity}: Let $m_1, m_2, m_3\in M$ with $m_1\leq^{-}
m_2$ and $m_2\leq^{-} m_3$. By Theorem \ref{main}, there exist
$f_1^2=f_1\in S$, $a_1^2=a_1\in R$ such that $l_S(m_1)=l_S(f_1)$,
$r_R(m_1)=r_R(a_1)$, $f_1m_1=f_1m_2$, $m_1a_1=m_2a_1$; and
$f_2^2=f_2\in S$, $a_2^2=a_2\in R$ such that $l_S(m_2)=l_S(f_2)$,
$r_R(m_2)=r_R(a_2)$, $f_2m_2=f_2m_3$, $m_2a_2=m_3a_2$. From
$(1_S-f_1)m_1=0$ and $m_1(1_R-a_1)=0$ it follows
\begin{equation}\label{eqJMB}
m_1=f_1m_1=f_1m_2=m_1a_1=m_2a_1.
\end{equation}
Let $f=f_1+f_1f_2(1_S-f_1)$ and
$a=a_1+(1_R-a_1)a_2a_1$. Then $f^2=f\in S$ and $a^2=a\in R$. We
claim that \begin{center} $l_S(m_1)=l_S(f)$, $r_R(m_1)=r_R(a)$,
$fm_1=fm_3$, and $m_1a=m_3a$.
\end{center}

\begin{enumerate}
    \item[(i)] Clearly, $l_S(m_1)=l_S(f_1)\subseteq l_S(f)$. Let $g\in
l_S(f)$. Then $gff_1=0$. It follows $gf_1=0$ and so $g\in
l_S(f_1)=l_S(m_1)$.
 Hence $l_S(m_1)=l_S(f)$.
    \item[(ii)] It is obvious that $r_R(m_1)=r_R(a_1)\subseteq r_R(a)$.
Let $x\in r_R(a)$. Then $a_1ax=0$. This yields $a_1x=0$, i.e.
$x\in r_R(a_1)=r_R(m_1)$, and thus $r_R(m_1)=r_R(a)$.
    \item[(iii)] We will now prove $fm_1=fm_3$. By (\ref{eqJMB}), $fm_1=m_1$.
    Similarly, $fm_2=m_2$.
 From the definition of $f$ we obtain, $fm_3=f_1m_3+f_1f_2m_3-f_1f_2f_1m_3$.
 Note that $f_1f_2m_3=f_1f_2m_2=f_1m_2=f_1m_1=m_1$.
 Then $$fm_3=m_1+f_1m_3-f_1f_2f_1m_3=m_1+f_1(1_S-f_2)f_1m_3.$$
 We claim that $f_1(1_S-f_2)f_1=0$, that is, $f_1(1-f_2)\in l_S(f_1)=l_S(m_1)$.
 By $f_1f_2m_1=f_1f_2m_2a_1=f_1m_2a_1=f_1m_1$, we obtain $f_1(1_S-f_2)m_1=0$ and
 hence $f_1(1_S-f_2)f_1=0$, as claimed. Thus $fm_3=m_1$ and
 therefore $fm_1=fm_3$.
    \item[(iv)] We assert that $m_1a=m_3a$. We have $m_1a_1=m_1$ due to
$m_1(1_R-a_1)=0$. Similarly, $m_2a_2=m_2$. From
$m_3a=m_3a_1+m_3a_2a_1-m_3a_1a_2a_1$ and since
$m_3a_2a_1=m_2a_2a_1=m_2a_1=m_1a_1=m_1$, we obtain
$$m_3a=m_1+m_3a_1(1_R-a_2)a_1.$$ We now claim that $a_1(1_R-a_2)a_1=0$, i.e.
$(1_R-a_2)a_1\in r_R(a_1)=r_R(m_1)$. By
$m_1a_2a_1=f_1m_2a_2a_1=f_1m_2a_1=m_1a_1$ it follows that
$m_1(1_R-a_2)a_1=0$. Then $a_1(1_R-a_2)a_1=0$, as required. Hence
$m_3a=m_1$ and thus $m_1a=m_3a$.
\end{enumerate}
By Theorem \ref{main}, $m_1\leq^{-} m_3$ and so the relation
$\leq^{-} $ is a partial order on $M$.
\end{proof}

We close this section by giving the star analogues of the minus
order on modules. A ring equipped with an involution $\ast$ will
be called a \textit{$\ast$-ring}. A $\ast$-ring $R$ is called a
\textit{Rickart $\ast$-ring} if the left annihilator $l_{R}(a)$ of
any element $a\in R$ is generated by a projection (i.e. a
self-adjoint idempotent), equivalently, the right annihilator
$r_{R}(a)$ of any element $a\in R$ is generated by a projection.
Recall that Drazin \cite{Drazin} defined the star partial order in
a general setting of proper $\ast$-semigroups and note that
natural special cases of proper $\ast $-semigroups are all
\textit{proper $\ast$-rings}, with ``properness" defined via
$aa^{\ast}=0$ implying $a=0$. Recall also (see e.g.  \cite{Baer})
that any Rickart $\ast$-ring has the identity and is a proper
$\ast$-ring.

For  Rickart $\ast$-rings, the following characterization of the
star partial order $\underset{\ast}{\leq}$ was given in
\cite{MarovtRakicDjodjevic}. Let $R$ be a Rickart $\ast$-ring. For
$a,b\in R$ we have $a\underset{\ast}{\leq}b$ if and only if there
exist projections $p,q\in R$ such that
\begin{equation}
l_{R}(a)=R(1_R-p),\;r_{R}(a)=(1_R-q)R,\;pa=pb,\;\text{and
}\;aq=bq. \label{Rakic}
\end{equation}
Observe here (see e.g. \cite[Lemma 2.1]{DjordjevicRakicMarovt})
that for any idempotent $p$ in a ring $R$ with identity,
\begin{equation}\label{prop_idempotents}
R(1_R-p)=l_R(p) \quad \text{and} \quad
 (1_R-p)R=r_R(p).
 \end{equation}

Two orders that are closely related to the star and the minus
partial orders are \textit{the left-star} and \textit{the
right-star partial orders}. These orders were introduced by
Baksalary and Mitra in \cite{BaksalaryMitra} on the set of
$m\times n$ complex matrices $M_{m,n}(\mathbb{C})$. The left-star
partial order is defined as follows. For $A,B\in
M_{m,n}(\mathbb{C})$,
$$
A\ast\!\!\leq B\text{\quad if\quad}A^{\ast}A=A^{\ast}B\text{ and }
\mathrm{Im}\,A\subseteq\mathrm{Im}\,B.
$$
The right star partial order ${\leq\hspace{-0.1cm}\ast}$ is
defined symmetrically:\ $A{\leq\hspace{-0.1cm}\ast}\,B$ if and
only if $AA^{\ast}=BA^{\ast}$ and
$\mathrm{Im}\,A^{\ast}\subseteq\mathrm{Im}\,B^{\ast}$. These two
orders were generalized in \cite{DolinarGutermanMarovt} from the
set of all $n\times n$ complex matrices to $B(\mathcal{H})$ where
$\mathcal{H}$ is an arbitrary complex Hilbert space. Using
annihilators, authors further generalized in
\cite{MarovtRakicDjodjevic} the left and the right-star partial
orders to Rickart $\ast$-rings.

Motivated (\ref{Rakic}) and (\ref{prop_idempotents}), and by
generalizations of the left-star and the right-star orders to
Rickart $\ast$-rings in \cite{MarovtRakicDjodjevic}, we will now
extend the notion of these orders to the module theoretic setting.

\begin{defn}\label{ikinci}{\rm Let $M$ be a module and $m_1, m_2\in
M$. We write
\begin{itemize}
\item[(i)] $m_1\leq\hspace{-0.1cm}\ast~ m_2$  if there
exist $f^2=f\in S$, $a^2=a=a^*\in R$ such that $l_S(m_1)=l_S(f)$,
$r_R(m_1)=r_R(a)$, $fm_1=fm_2$, and $m_1a=m_2a$, where $R$ is a
 $\ast$-ring. We call the relation
$\leq\hspace{-0.1cm}\ast$ the right-star order on $M$.

\item[(ii)] $m_1~\ast\hspace{-0.1cm}\leq~ m_2$  if there
exist $f^2=f=f^*\in S$, $a^2=a\in R$ such that $l_S(m_1)=l_S(f)$,
$r_R(m_1)=r_R(a)$, $fm_1=fm_2$, and $m_1a=m_2a$, where $S$ is a
 $\ast$-ring. We call the relation
$\ast\hspace{-0.1cm}\leq$ the left-star order on $M$.

\item[(iii)] $m_1\underset{\ast}{\leq} m_2$  if there
exist $f^2=f=f^*\in S$, $a^2=a=a^*\in R$ such that
$l_S(m_1)=l_S(f)$, $r_R(m_1)=r_R(a)$, $fm_1=fm_2$ and $m_1a=m_2a$,
where $R$, and $S$ are $\ast$-rings. We call the relation
$\underset{\ast}{\leq}$ the star order on $M$.
\end{itemize}
}\end{defn}

We will prove that the relations introduced with Definition
\ref{ikinci} are partial orders when the module $M$ over a Rickart
$*$-ring is regular.

\begin{thm}\label{savci} Let $M$ be a regular module. Then the
following hold.
\begin{enumerate}
    \item[{\rm(1)}] If $R$ is a Rickart $\ast$-ring, then the relation
    $\leq\hspace{-0.1cm}\ast$, introduced
with Definition \ref{ikinci}, is a partial order on $M$.
    \item[{\rm(2)}] If $S$ is a Rickart $\ast$-ring, then the relation
    $\ast\hspace{-0.1cm}\leq$, introduced
with Definition \ref{ikinci}, is a partial order on $M$.
    \item[{\rm(3)}] If $R$ and $S$ are Rickart $\ast$-rings, then the
    relation $\underset{\ast}{\leq}$, introduced
with Definition \ref{ikinci}, is a partial order on $M$.
\end{enumerate}
\end{thm}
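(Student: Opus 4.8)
The plan is to verify reflexivity, antisymmetry, and transitivity for each of the three relations. Since $R$ and $S$ enter the definitions symmetrically, the three cases can be handled in parallel, and I will spell out only the right-star case $\leq\hspace{-0.1cm}\ast$ in full. I first record that each of the three relations is contained in $\leq^{-}$: a projection is in particular an idempotent, so whenever $m_1$ stands in any of them to $m_2$, the hypotheses of the converse implication in Theorem \ref{main} are met (with $m_1$ regular, since $M$ is regular), whence $m_1\leq^{-}m_2$. Antisymmetry of all three relations is then immediate from the antisymmetry of $\leq^{-}$ established in Theorem \ref{ant}.

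For reflexivity, given $m\in M$ write $m=m\varphi m$ with $\varphi\in M^{*}$; as in the proof of Theorem \ref{main}, $f_0=m\varphi\in S$ and $a_0=\varphi m\in R$ are idempotents with $l_S(m)=l_S(f_0)$ and $r_R(m)=r_R(a_0)$. If $R$ is a Rickart $\ast$-ring, its defining property applied to $a_0$ gives a projection $h\in R$ with $r_R(m)=r_R(a_0)=hR$, and then $a:=1_R-h$ is a projection satisfying $r_R(a)=r_R(1_R-h)=hR=r_R(m)$; symmetrically, if $S$ is a Rickart $\ast$-ring one obtains $l_S(m)=Sg$ for a projection $g\in S$, and $f:=1_S-g$ is a projection with $l_S(f)=Sg=l_S(m)$. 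Taking $m_1=m_2=m$, using the projections just constructed on whichever side(s) the definition demands self-adjointness and $f_0$ or $a_0$ on the remaining side, the conditions $fm=fm$ and $ma=ma$ hold trivially; hence $m$ is related to itself in each of the three senses.

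The heart of the matter is transitivity. Here the difficulty is that the idempotents $f=f_1+f_1f_2(1_S-f_1)$ and $a=a_1+(1_R-a_1)a_2a_1$ used to prove transitivity of $\leq^{-}$ in Theorem \ref{ant} need not be self-adjoint even when $f_1,f_2,a_1,a_2$ are, so a different witness is needed on the self-adjoint side. The key is the observation that $m_1\leq^{-}m_2$ forces $r_R(m_2)\subseteq r_R(m_1)$ and $l_S(m_2)\subseteq l_S(m_1)$ (with $\varphi$ a witness one has $m_1=m_2(\varphi m_2)$, and the inclusions follow from the $R$-linearity of $\varphi$). Now assume $m_1\leq\hspace{-0.1cm}\ast\ m_2$ and $m_2\leq\hspace{-0.1cm}\ast\ m_3$, with witnessing projections $a_1,a_2\in R$ and idempotents $f_1,f_2\in S$ as in Definition \ref{ikinci}(i). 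Combining $r_R(m_2)\subseteq r_R(m_1)$ with the equalities $r_R(m_i)=r_R(a_i)$ gives $(1_R-a_2)R\subseteq(1_R-a_1)R$, hence $a_1a_2=a_1$; applying the involution and using $a_1^{*}=a_1$, $a_2^{*}=a_2$ yields also $a_2a_1=a_1$. Consequently $m_3a_1=m_3a_2a_1=m_2a_2a_1=m_2a_1=m_1a_1$, so the projection $a:=a_1$ already satisfies $r_R(m_1)=r_R(a)$ and $m_1a=m_3a$; for the $S$-side one keeps $f=f_1+f_1f_2(1_S-f_1)$ exactly as in Theorem \ref{ant}, whose verification that $l_S(m_1)=l_S(f)$ and $fm_1=fm_3$ uses only that $f_1,f_2$ are idempotents together with the data already available here. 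By Definition \ref{ikinci}(i) this gives $m_1\leq\hspace{-0.1cm}\ast\ m_3$.

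The left-star case ($\ast\hspace{-0.1cm}\leq$) is the mirror image: now $l_S(m_2)\subseteq l_S(m_1)$ forces $f_1f_2=f_2f_1=f_1$, hence $f_1m_3=f_1f_2m_3=f_1f_2m_2=f_1m_2=f_1m_1$, so the projection $f_1$ serves on the $S$-side while $a=a_1+(1_R-a_1)a_2a_1$ from Theorem \ref{ant} serves on the $R$-side. For the star order $\underset{\ast}{\leq}$, self-adjointness is demanded on both sides and one checks directly that the pair of projections $(f_1,a_1)$ already witnesses $m_1\underset{\ast}{\leq}m_3$. Thus the whole difficulty is concentrated in transitivity and in the self-adjointness constraint; its resolution is that, once the two annihilator inclusions are in hand, the ``smaller'' witness $a_1$ (resp.\ $f_1$) carries over unchanged, and the rest reduces to the argument already given for $\leq^{-}$.
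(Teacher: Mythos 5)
Your proposal is correct and follows essentially the same route as the paper: the crux in both is that self-adjointness of $a_1,a_2$ (resp.\ $f_1,f_2$) upgrades $a_1a_2=a_1$ to $a_2a_1=a_1$, so the projection $a_1$ (resp.\ $f_1$) itself witnesses transitivity on the starred side, while an idempotent built from $f_1,f_2$ handles the other side. The only departures are cosmetic: you obtain antisymmetry by embedding each starred relation into $\leq^{-}$ via Theorem \ref{main} and invoking Theorem \ref{ant}, where the paper argues directly, and you reuse the witness $f_1+f_1f_2(1_S-f_1)$ from Theorem \ref{ant}, where the paper takes the simpler $f=f_1f_2$ (shown idempotent via $f_1=f_1f_2f_1$).
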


\begin{proof} We will only prove (1). The proofs of (2) and (3) are similar. \\
 \textit{Reflexivity:} Let $m\in M$. Since $M$ is regular, there exists
 $\varphi \in M^*$ such that $m=m\varphi m$. Let
$f=m\varphi$ and $b=\varphi m$. Then $f^2=f\in S$ and $b^2=b\in
R$. Clearly, $l_S(m)\subseteq l_S(f)$. From $m=m\varphi m$ we have
$m=fm$. Thus $l_S(f)\subseteq l_S(m)$ and hence $l_S(m)=l_S(f)$.
Similarly, $r_R(m)=r_R(b)$.  By assumption $R$ is a Rickart
$\ast$-ring so since $b\in R$, there exists (the unique) $a\in R$
with $a=a^2=a^{\ast}$ and $r_R(a)=r_R(b)$. To sum up, there exists
$f^2=f\in S$, $a^2=a=a^*\in R$ such that $l_S(m)=l_S(f)$,
$r_R(m)=r_R(a)$.  We may conclude that $m\leq\hspace{-0.1cm}\ast~ m$
for every $m\in M$.\\
\textit{Antisymmetry:} Let $m_1, m_2\in M$ with
$m_1\leq\hspace{-0.1cm}\ast~ m_2$ and $m_2\leq\hspace{-0.1cm}\ast~
m_1$. Then there exist $f_1^2=f_1\in S$, $a_1^2=a_1=a_1^*\in R$
such that $l_S(m_1)=l_S(f_1)$, $r_R(m_1)=r_R(a_1)$,
$f_1m_1=f_1m_2$, $m_1a_1=m_2a_1$; and $f_2^2=f_2\in S$,
$a_2^2=a_2=a_2^*\in R$ such that $l_S(m_2)=l_S(f_2)$,
$r_R(m_2)=r_R(a_2)$, $f_2m_2=f_2m_1$, $m_2a_2=m_1a_2$. By
(\ref{prop_idempotents}), $l_S(f_1)=S(1_S-f_1)$ and
$r_R(a_1)=(1_R-a_1)R$ which imply by assumption that
$(1_S-f_1)m_1=0$ and $m_1(1_R-a_1)=0$. So,
$m_1=f_1m_1=m_1a_1=m_2a_1$. Similarly, $m_2=f_2m_2=f_2m_1$. Hence
$$m_2=f_2m_1=f_2m_2a_1=m_2a_1=m_1.$$
\textit{Transitivity:} Let $m_1, m_2, m_3\in M$ with
$m_1\leq\hspace{-0.1cm}\ast~ m_2$ and $m_2\leq\hspace{-0.1cm}\ast~
m_3$. Then there exist $f_1^2=f_1\in S$, $a_1^2=a_1=a_1^*\in R$
such that $l_S(m_1)=l_S(f_1)$, $r_R(m_1)=r_R(a_1)$,
$f_1m_1=f_1m_2$, $m_1a_1=m_2a_1$; and $f_2^2=f_2\in S$,
$a_2^2=a_2=a_2^*\in R$ such that $l_S(m_2)=l_S(f_2)$,
$r_R(m_2)=r_R(a_2)$, $f_2m_2=f_2m_3$, $m_2a_2=m_3a_2$. From
$(1_S-f_1)m_1=0$ and $m_1(1_R-a_1)=0$ we obtain
$$m_1=f_1m_1=f_1m_2=m_1a_1=m_2a_1.$$ Let $f=f_1f_2$ and $a=a_1$. To
conclude the proof we will show that $f^2=f\in S$ and \begin{center}
$l_S(m_1)=l_S(f)$,
$fm_1=fm_3$, and $m_1a=m_3a$.
\end{center}
From  $r_R(m_2)=r_R(a_2)$ we have $m_2=m_2a_2$ and thus
$m_1a_1a_2=f_1m_2a_2=f_1m_2=m_1$. So $m_1(a_1a_2-1_R)=0$. Since
$r_R(m_1)=r_R(a_1)$, it follows that $a_1(a_1a_2-1_R)=0$ and hence
$a_1a_2=a_1$.  Recall that $a_1$ and $a_2$ are self-adjoined. So,
$a_1=a_2a_1$ which yields
$$m_1a=m_1a_1=m_2a_1=m_2a_2a_1=m_3a_2a_1=m_3a_1=m_3a.$$
Since $l_S(m_2)=l_S(f_2)$, we have $m_2=f_2m_2$ which implies
$f_2m_1=f_2m_2a_1=m_2a_1=m_1$. So,
$(1_S-f_1f_2)m_1=m_1-f_1f_2m_1=m_1-f_1m_1=m_1-m_1=0$. Since
$l_S(m_1)=l_S(f_1)$, $(1-f_1f_2)f_1=0$, so $f_1=f_1f_2f_1$. Now
$fm_1=f_1f_2m_1=m_1$ and $fm_3=f_1f_2m_3=f_1f_2m_2=f_1m_2=m_1$. So
$$fm_1=fm_3.$$ Also $f^2=(f_1f_2)^2=f_1f_2f_1f_2=f_1f_2$. Finally,
since $f=f_1f_2$ and $f_1=f_1f_2f_1=ff_1$ we get,
$l_S(m_1)=l_S(f_1)=l_S(f)$.
 We may conclude that $m_1\leq\hspace{-0.1cm}\ast~ m_3$.
\end{proof}

\section{Some Characterizations of the Minus order on Modules}

In this section we investigate some properties and
characterizations of the minus partial order on modules.

\begin{prop}\label{savci} {\rm Let $M$ be a regular module and
$m_1, m_2\in M$.  Then we have the following.
\begin{enumerate}
    \item $m_1\leq^{-} m_2$ if and only if $g m_1\leq^{-} g m_2$
    for every invertible element $g \in S$.
     \item $m_1\leq^{-} m_2$ if and only if $m_1 b\leq^{-} m_2 b$
    for every invertible element $b \in R$.
\end{enumerate}}\end{prop}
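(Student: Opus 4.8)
The plan is to prove both equivalences by exhibiting, from a witness $\varphi$ for one minus relation, an explicit witness for the other, using the definition (Definition~\ref{ilk}) directly rather than the annihilator characterization. For part~(1), suppose $m_1 \leq^{-} m_2$ with witness $\varphi \in M^*$, so $m_1 = m_1\varphi m_1$, $m_1\varphi = m_2\varphi$, and $\varphi m_1 = \varphi m_2$. Given an invertible $g \in S$, I would try $\psi = \varphi g^{-1} \in M^*$ (note $\psi$ is still an $R$-module homomorphism $M \to R$ since $g^{-1} \in S$ acts on the left of $M$ and $\varphi$ is right $R$-linear). Then check: $(gm_1)\psi(gm_1) = g m_1 \varphi g^{-1} g m_1 = g m_1 \varphi m_1 = g m_1$; next $(gm_1)\psi = g m_1 \varphi g^{-1} = g m_2 \varphi g^{-1} = (gm_2)\psi$; and $\psi(gm_1) = \varphi g^{-1} g m_1 = \varphi m_1 = \varphi m_2 = \varphi g^{-1} g m_2 = \psi(gm_2)$. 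Hence $g m_1 \leq^{-} g m_2$. The converse follows by replacing $g$ with $g^{-1}$ and applying the forward direction to $gm_1 \leq^{-} gm_2$.

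For part~(2), the symmetric situation, suppose $m_1 \leq^{-} m_2$ with witness $\varphi$, and let $b \in R$ be invertible. Here the natural candidate is $\psi = b^{-1}\varphi \in M^*$, meaning $\psi(x) = b^{-1}\varphi(x)$ for $x \in M$; this is right $R$-linear since left multiplication by $b^{-1}$ on $R$ commutes with right $R$-multiplication. Then $(m_1 b)\psi(m_1 b) = m_1 b \, b^{-1}\varphi(m_1 b) = m_1 \varphi(m_1) b = m_1 b$ (using $m_1 = m_1\varphi m_1$ and that $\varphi(m_1 b) = \varphi(m_1) b$); the equality $(m_1 b)\psi = (m_2 b)\psi$ reduces to $m_1 b b^{-1}\varphi = m_2 b b^{-1}\varphi$, i.e. $m_1\varphi = m_2\varphi$; and $\psi(m_1 b) = b^{-1}\varphi(m_1)b = b^{-1}\varphi(m_2)b = \psi(m_2 b)$ using $\varphi m_1 = \varphi m_2$. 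So $m_1 b \leq^{-} m_2 b$, and again the converse is obtained by substituting $b^{-1}$.

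The main point to be careful about — and the only genuine obstacle — is bookkeeping with the two-sided structure: elements of $M^*$ are right $R$-linear maps, $S$ acts on the left, and one must verify that the modified maps $\varphi g^{-1}$ and $b^{-1}\varphi$ genuinely land in $M^*$ and that the manipulations respect the bimodule axioms (for instance $\varphi(xb) = \varphi(x)b$ is what lets the computation in part~(2) go through, and $(gm)\varphi = g(m\varphi)$ as elements of $S$ is needed in part~(1)). Everything else is a routine substitution into Definition~\ref{ilk}. I would present part~(1) in full and then remark that part~(2) follows by the analogous argument with $b^{-1}\varphi$ in place of $\varphi g^{-1}$.
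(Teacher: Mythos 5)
Your proof is correct, but it takes a genuinely different route from the paper. The paper proves this proposition through Theorem \ref{main}: it extracts idempotents $f\in S$, $a\in R$ with $l_S(m_1)=l_S(f)$, $r_R(m_1)=r_R(a)$, $fm_1=fm_2$, $m_1a=m_2a$, and then conjugates them, taking $p=gfg^{-1}$ for part (1) and $q=b^{-1}ab$ for part (2), verifying the four annihilator conditions for $gm_1$ (resp.\ $m_1b$). You instead work straight from Definition \ref{ilk} and transport the witness functional itself, replacing $\varphi$ by $\varphi g^{-1}$ (resp.\ $b^{-1}\varphi$); your verifications that these maps lie in $M^*$ and satisfy the three defining identities are all correct, and the converse via $g^{-1}$ (resp.\ $b^{-1}$) is the same trick the paper uses. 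What your approach buys is economy and generality: it avoids the annihilator bookkeeping entirely, and --- since the witness $\varphi$ in Definition \ref{ilk} already certifies the regularity of $m_1$, and the transported witness certifies the regularity of $gm_1$ and $m_1b$ directly --- it never actually uses the hypothesis that $M$ is a regular module, so your argument proves the statement for arbitrary modules. What the paper's approach buys is uniformity with the rest of Section 3, where the idempotent characterization of Theorem \ref{main} is the standing workhorse.
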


\begin{proof} (1) Assume first that $m_1\leq^{-} m_2$. By Theorem \ref{main}, there exist
$f^2=f\in S$, $a^2=a\in R$ such that $l_S(m_1)=l_S(f)$,
$r_R(m_1)=r_R(a)$, $fm_1=fm_2$, and $m_1a=m_2a$. Let $g \in S$ be invertible. Note first that $gm_1$ is a regular element in $M$ since $M$ is a regular module. Denote $p=gfg^{-1}$. Then $p \in S$ is an idempotent. We will show that $l_S(g m_1)=l_S(p)$,
$r_R(gm_1)=r_R(a)$, $pgm_1=pgm_2$, and $gm_1a=gm_2a$.

\begin{enumerate}
    \item [(i)] If $h \in l_S(g m_1)$, then $hgm_1=0$ and so $hgfm_1=0$. Hence
$hgf \in l_S(m_1)=l_S(f)$. Thus $hgf=0$ and so $hgfg^{-1}=0$.
Therefore $l_S(g m_1)\subseteq l_S(p)$. Conversely, if $x \in
l_S(p)$, then $xp=xgfg^{-1}=0$. Hence
$xgm_1=xgfm_1=xgfg^{-1}gm_1=0$ which yields $l_S(p) \subseteq l_S(g
m_1)$ and therefore $l_S(g m_1)=l_S(p)$.
    \item [(ii)] If $x \in r_R(gm_1)$, then $gm_1x=0$ and so
    $m_1x=0$. Hence $x \in r_R(m_1)=r_R(a)$. Thus $r_R(gm_1)\subseteq
    r_R(a)$. Conversely, if $x \in r_R(a)$, then $m_1x=0$ and so
    $gm_1x=0$. Hence $x \in r_R(gm_1)$ and thus $r_R(a)\subseteq
    r_R(gm_1)$. So, $r_R(gm_1)=r_R(a)$.
    \item [(iii)] $pgm_1=gfg^{-1}gm_1=gfm_1=gfm_2=gfg^{-1}gm_2=pgm_2$.
    \item [(iv)] Since $m_1a=m_2a$, $gm_1a=gm_2a$.
\end{enumerate}
Therefore $g m_1\leq^{-} g m_2$. \\
If  $g m_1\leq^{-} g m_2$ for an invertible $g\in S$, then $g^{-1} g m_1\leq^{-} g^{-1}g m_2$, i.e. $m_1\leq^{-} m_2$. \\
(2) Assume first, that
$m_1\leq^{-} m_2$. By Theorem \ref{main}, there exist $f^2=f\in
S$, $a^2=a\in R$ such that $l_S(m_1)=l_S(f)$, $r_R(m_1)=r_R(a)$,
$fm_1=fm_2$, and $m_1a=m_2a$. For an invertible element $b \in R$,
let $q=b^{-1}ab$.  Note again that $m_1b$ is a regular element in $M$ since $M$ is a regular module.  Then $q \in R$ is an idempotent. We will show
that $l_S(m_1b)=l_S(f)$, $r_R(m_1b)=r_R(q)$, $m_1bq=m_2bq$, and
$fm_1b=fm_2b$.
\begin{enumerate}
    \item [(i)] If $h \in l_S(m_1b)$, then $hm_1b=0$ and so $hm_1bb^{-1}=0$.
    Hence $hm_1=0$ and so $h \in l_S(m_1)=l_S(f)$. Thus
     $l_S(m_1b)\subseteq l_S(f)$. Conversely, if $h \in
l_S(f)=l_S(m_1)$, then $hm_1=0$ and so $hm_1b=0$. Hence $h \in
l_S(m_1b)$, i.e. $l_S(f) \subseteq l_S(m_1b)$, and therefore
$l_S(m_1b)=l_S(f)$.
    \item [(ii)] If $x \in r_R(q)$, then $qx=0$ and thus
    $b^{-1}abx=0$. Hence $bb^{-1}abx=0$ and so $abx=0$. From
    $bx \in r_R(a)=r_R(m_1)$ we obtain $m_1bx=0$ and therefore $r_R(q)\subseteq
    r_R(m_1b)$. Conversely, if $x \in r_R(m_1b)$, then $m_1bx=0$ which yields
    $bx \in r_R(m_1)=r_R(a)$.  Hence $abx=0$ and so $b^{-1}abx=0$.
    Thus $qx=0$, i.e.  $x \in r_R(q)$. It follows that $r_R(m_1b)\subseteq
    r_R(q)$ and therefore $r_R(m_1b)=r_R(q)$.
    \item [(iii)] $m_1bq=m_1bb^{-1}ab=m_1ab=m_2ab=m_2bb^{-1}ab=m_2bq$.
    \item [(iv)] Since $fm_1=fm_2$, $fm_1b=fm_2b$.
\end{enumerate}
We conclude that $m_1 b\leq^{-} m_2 b$. \\
 If  $m_1 b\leq^{-} m_2 b$ for an invertible $b\in R$, then $m_1b b^{-1}\leq^{-} m_2 b b^{-1}$, i.e. $m_1\leq^{-} m_2$.
\end{proof}

We will next prove that if we somewhat relax conditions
on left and right annihilators in Theorem \ref{main}, we still
obtain the minus partial order on regular modules.

\begin{thm}\label{elif} Let $M$ be a regular module and $m_1, m_2\in M$.
Then $m_1\leq^{-} m_2$ if and only if there exist $f^2=f\in S$,
$a^2=a\in R$ such that $l_S(f)\subseteq l_S(m_1)$, $r_R(a)
\subseteq r_R(m_1)$, $fm_1=fm_2$, and $m_1a=m_2a$.
\end{thm}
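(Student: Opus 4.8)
The plan is to treat the two implications separately. The forward direction is immediate from Theorem~\ref{main}, and the reverse direction is a mild variant of the converse half of that theorem's proof: the only real work is to check that the weakened one-sided inclusions still deliver the two equalities, $m_1=fm_1$ and $m_1=m_1a$, that the earlier argument actually exploited.

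For the forward implication, suppose $m_1\leq^{-}m_2$. Since $M$ is regular, $m_1$ is a regular element, so Theorem~\ref{main} yields idempotents $f\in S$ and $a\in R$ with $l_S(m_1)=l_S(f)$, $r_R(m_1)=r_R(a)$, $fm_1=fm_2$, and $m_1a=m_2a$; in particular $l_S(f)\subseteq l_S(m_1)$ and $r_R(a)\subseteq r_R(m_1)$, so the same $f$ and $a$ witness the right-hand side. For the converse, assume $f^2=f\in S$ and $a^2=a\in R$ satisfy $l_S(f)\subseteq l_S(m_1)$, $r_R(a)\subseteq r_R(m_1)$, $fm_1=fm_2$, and $m_1a=m_2a$. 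First I would note that idempotency gives $(1_S-f)f=0$ and $a(1_R-a)=0$, so $1_S-f\in l_S(f)\subseteq l_S(m_1)$ and $1_R-a\in r_R(a)\subseteq r_R(m_1)$; hence $(1_S-f)m_1=0$ and $m_1(1_R-a)=0$, i.e. $m_1=fm_1$ and $m_1=m_1a$. Next, using regularity of $M$, fix $\varphi\in M^{*}$ with $m_1=m_1\varphi m_1$ and put $\beta=a\varphi f$; this lies in $M^{*}$ because left multiplication by $a\in R$ and precomposition with $f\in S$ both preserve $R$-linearity. The remaining verification is routine and mirrors the converse of Theorem~\ref{main}: $m_1\beta m_1=m_1a\varphi fm_1=m_1\varphi m_1=m_1$, then $m_1\beta=m_1a\varphi f=m_2a\varphi f=m_2\beta$ by $m_1a=m_2a$, and $\beta m_1=a\varphi fm_1=a\varphi fm_2=\beta m_2$ by $fm_1=fm_2$; so $m_1\leq^{-}m_2$ by Definition~\ref{ilk}.

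I do not expect a genuine obstacle. The substance of the statement is simply that the equalities $l_S(m_1)=l_S(f)$ and $r_R(m_1)=r_R(a)$ in Theorem~\ref{main} can be replaced by one-sided inclusions, and the reason is that the original converse used those hypotheses only through their consequences $m_1=fm_1$ and $m_1=m_1a$, which the inclusions still force as soon as $f$ and $a$ are idempotent. The only thing demanding care is the bookkeeping in the composite $\beta=a\varphi f$ — remembering that $a$ acts on $M$ on the right, $f$ on the left, and $\varphi\colon M\to R$ — and invoking $m_1=fm_1$ and $m_1=m_1a$ on the correct side in each of the three identities above.
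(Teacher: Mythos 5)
Your proof is correct. Both implications check out: the forward direction is indeed immediate from Theorem~\ref{main}, and in the converse your key observation — that $(1_S-f)f=0$ and $a(1_R-a)=0$ push $1_S-f$ and $1_R-a$ through the one-sided inclusions to give $(1_S-f)m_1=0$ and $m_1(1_R-a)=0$, hence $m_1=fm_1=m_1a$ — is exactly what makes the weakened hypotheses suffice. The subsequent verification with $\beta=a\varphi f$ is the same computation as in the converse half of Theorem~\ref{main} and lands directly in Definition~\ref{ilk}.

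Your route does differ from the paper's. The paper does not return to Definition~\ref{ilk}; instead it manufactures new idempotents from the given ones: it sets $f_1=m_1\varphi$ and $a_1=\varphi m_1$, proves $l_S(f_1)=l_S(m_1)$ and $r_R(a_1)=r_R(m_1)$, deduces $f_1=f_1ff_1$ and $a_1=a_1aa_1$ from the inclusions, and then shows that the products $f_1f$ and $aa_1$ are idempotents satisfying the \emph{equalities} $l_S(m_1)=l_S(f_1f)$ and $r_R(m_1)=r_R(aa_1)$ together with $f_1fm_1=f_1fm_2$ and $m_1aa_1=m_2aa_1$, so that Theorem~\ref{main} applies verbatim. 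Your argument is shorter and avoids that construction entirely, at the cost of repeating the $\beta=a\varphi f$ computation rather than citing Theorem~\ref{main} as a black box; the paper's version has the side benefit of exhibiting explicit idempotents that witness the full annihilator equalities, which is the form later results (e.g.\ the corollary and Theorem~\ref{equiv-1}) feed back into. Either way the substance is the same: the hypotheses of Theorem~\ref{main} were only ever used through $m_1=fm_1$ and $m_1=m_1a$.
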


\begin{proof}
Suppose first that there exist $f^2=f\in S$, $a^2=a\in R$ such
that $l_S(f)\subseteq l_S(m_1)$, $r_R(a) \subseteq r_R(m_1)$,
$fm_1=fm_2$, and $m_1a=m_2a$. Since $M$ is a regular module,
for $m_1\in M$, there exists $\varphi\in M^*$ such that $m_1
=m_1\varphi m_1$. Let $$f_1=m_1\varphi \in S \quad
\text{and}\quad a_1=\varphi m_1 \in R.$$ Clearly, $f_1^2
=f_1$ and $a_1^2=a_1$. We will first show that
$l_{S}(f_1)=l_{S}(m_1)$ and $r_{R}(a_1)=r_{R}(m_1)$. Let $g \in
l_{S}(m_1)$. Then $g m_1=0$ and so $gf_1 = g m_1\varphi =0$. Hence
$g \in l_{S}(f_1)$ and thus $l_{S}(m_1) \subseteq l_{S}(f_1)$.
Conversely, if $g \in l_{S}(f_1)$,  then $g f_1=0$ and so $gm_1 =
g m_1\varphi m_1 =gf_1m_1=0$. It follows that $g \in
l_{S}(m_1)$, i.e. $l_{S}(f_1) \subseteq l_{S}(m_1)$, and thus
$l_{S}(f_1)=l_{S}(m_1)$. Now let $b \in r_{R}(m_1)$. Then $m_1b=0$
and so $a_1b=\varphi m_1b=0$. Hence $b \in r_{R}(a_1)$ and so
$r_{R}(m_1)\subseteq r_{R}(a_1)$. For the reverse inclusion, let
$b \in r_{R}(a_1)$. Then $m_1b=m_1\varphi m_1b=m_1a_1b=0$ and
hence $b \in r_{R}(m_1)$. So, $r_{R}(m_1)= r_{R}(a_1)$.

Next we will prove that
$l_{S}(f) \subseteq l_{S}(f_1)$ and $r_{R}(a)\subseteq
r_{R}(a_1)$. Let $g \in l_{S}(f)$. Then $g f=0$ and so $gf_1 = g
m_1\varphi =gfm_1\varphi=0$. Hence $g \in l_{S}(f_1)$, i.e.
$l_{S}(f) \subseteq l_{S}(f_1)$. For the other inclusion, let $b
\in r_{R}(a)$. Then $ab=0$ and so $a_1b=\varphi m_1b=\varphi
m_1ab=0$. It follows that $b \in r_{R}(a_1)$, i.e. $r_{R}(a)\subseteq
r_{R}(a_1)$. Since then $(1_S-f)f_1=0$ and $a_1(1_R-a)=0$ and since $f_1$ and $a_1$ are idempotents, we have $f_1=ff_1 =
f_1 f f_1$ and $a_1=a_1 a= a_1 a a_1$. Note that $f_1 f \in S$ and $a
a_1 \in R$ are idempotents.

 We will now prove that $l_{S}(m_1)=l_{S}(f_1f)$,
 $r_{R}(m_1)=r_{R}(aa_1)$.
 Clearly, $l_{S}(m_1)=l_{S}(f_1)\subseteq l_{S}(f_1f)$ and
 $r_{R}(m_1)=r_{R}(a_1)\subseteq r_{R}(a a_1)$. For any $x\in
l_{S}(f_1f)$ and any $y\in r_{R}(a a_1)$, we have $xf_1=xf_1ff_1=0$
and $a_1y=a_1aa_1y=0$. This yields $x\in l_{S}(f_1)=l_{S}(m_1)$ and
$y\in r_{R}(a_1)= r_{R}(m_1)$. It follows that
$l_{S}(m_1)=l_{S}(f_1f)$ and $r_{R}(m_1)=r_{R}(aa_1)$, as desired.

Finally, from $fm_1=fm_2$ and $m_1a=m_2a$, we establish
that $f_1fm_1=f_1fm_2$ and $m_1aa_1=m_2aa_1$. Therefore
$m_1\leq^{-}m_2$.

The converse implication follows directly by Theorem \ref{main}.
\end{proof}

\begin{cor} Let $M$ be a regular module and $m_1, m_2\in M$.
Then $m_1\leq^{-} m_2$ if and only if there exist $f^2=f\in S$,
$a^2=a\in R$ such that $m_1R \subseteq fM$,  $Sm_1 \subseteq Ma$,
$fm_1=fm_2$, and $m_1a=m_2a$.
\end{cor}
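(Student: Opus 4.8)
The plan is to derive this corollary directly from Theorem~\ref{elif} by showing that the two pairs of conditions on annihilators/submodules are equivalent. Concretely, I would prove that for an idempotent $f^2=f\in S$ one has $m_1R\subseteq fM$ if and only if $l_S(f)\subseteq l_S(m_1)$, and symmetrically that for an idempotent $a^2=a\in R$ one has $Sm_1\subseteq Ma$ if and only if $r_R(a)\subseteq r_R(m_1)$. Once these two equivalences are in hand, the statement is immediate: both characterizations of $m_1\leq^{-}m_2$ share the identical conditions $fm_1=fm_2$ and $m_1a=m_2a$, so Theorem~\ref{elif} applies verbatim.

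For the first equivalence, suppose $l_S(f)\subseteq l_S(m_1)$. Since $f$ is idempotent, $(1_S-f)f=0$, so $1_S-f\in l_S(f)\subseteq l_S(m_1)$, giving $(1_S-f)m_1=0$, i.e. $m_1=fm_1\in fM$; hence $m_1R\subseteq fM$. Conversely, if $m_1R\subseteq fM$ then in particular $m_1=m_1\cdot 1_R\in fM$, say $m_1=fx$ for some $x\in M$; then $fm_1=f^2x=fx=m_1$, so any $g\in l_S(f)$ satisfies $gm_1=gfm_1=0$, whence $g\in l_S(m_1)$ and $l_S(f)\subseteq l_S(m_1)$. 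The second equivalence is the mirror image on the right: if $r_R(a)\subseteq r_R(m_1)$, then $a(1_R-a)=0$ puts $1_R-a\in r_R(a)\subseteq r_R(m_1)$, so $m_1(1_R-a)=0$, i.e. $m_1=m_1a$; and for any $s\in S$, $sm_1=sm_1a\in Ma$, so $Sm_1\subseteq Ma$. Conversely, if $Sm_1\subseteq Ma$ then $m_1=1_S\cdot m_1\in Ma$, say $m_1=ya$ for some $y\in M$, and then $m_1a=ya^2=ya=m_1$, so any $b\in r_R(a)$ gives $m_1b=m_1ab=0$, hence $b\in r_R(m_1)$ and $r_R(a)\subseteq r_R(m_1)$.

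With these equivalences established, I would finish as follows. If $m_1\leq^{-}m_2$, Theorem~\ref{elif} supplies $f^2=f\in S$ and $a^2=a\in R$ with $l_S(f)\subseteq l_S(m_1)$, $r_R(a)\subseteq r_R(m_1)$, $fm_1=fm_2$, $m_1a=m_2a$; by the equivalences just proved, the same $f$ and $a$ satisfy $m_1R\subseteq fM$, $Sm_1\subseteq Ma$, $fm_1=fm_2$, $m_1a=m_2a$. For the converse, given $f^2=f\in S$, $a^2=a\in R$ with $m_1R\subseteq fM$, $Sm_1\subseteq Ma$, $fm_1=fm_2$, $m_1a=m_2a$, the equivalences yield $l_S(f)\subseteq l_S(m_1)$ and $r_R(a)\subseteq r_R(m_1)$, so Theorem~\ref{elif} gives $m_1\leq^{-}m_2$.

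There is no real obstacle here; the only point requiring a little care is remembering to use the idempotency of $f$ and $a$ on the correct side (so that $1_S-f$ annihilates $m_1$ on the left and $1_R-a$ annihilates $m_1$ on the right), and observing that $fM=fR$-image means precisely that $fm=m$ for $m\in fM$, which is what makes the inclusion $l_S(f)\subseteq l_S(m_1)$ fall out. Everything else is a routine transcription through Theorem~\ref{elif}, and the regularity of $M$ is needed only insofar as it is already built into that theorem.
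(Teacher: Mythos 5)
Your proposal is correct and follows exactly the same route as the paper: the paper's proof simply records the two equivalences $l_S(f)\subseteq l_S(m_1)\Leftrightarrow m_1R\subseteq fM$ and $r_R(a)\subseteq r_R(m_1)\Leftrightarrow Sm_1\subseteq Ma$ (without writing out the verifications you supply) and then invokes Theorem~\ref{elif}. Your added details, using idempotency of $f$ and $a$ on the correct sides, are accurate and fill in what the paper leaves implicit.
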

\begin{proof} Note that for any $m\in M$, $f^2=f\in S$ and $a^2=a\in R$,
\begin{enumerate}
    \item $l_S(f) \subseteq l_S(m)$ if and only if $mR \subseteq
    fM$,
    \item $r_R(a) \subseteq r_R(m)$ if and only if $Sm \subseteq
    Ma$.
\end{enumerate}
The result is thus an immediate consequence of Theorem \ref{elif}.
\end{proof}

There are many definitions that are equivalent to Hartwig's
definition of the minus partial order. One of the most commonly
used is the following which is due to Jones (see for example
\cite{Higgins}). Let $\mathcal{S}$ be a semigroup. Then
\begin{equation}
a\leq_{J}b\text{\quad if\quad}a=pb=bq \label{Jones}
\end{equation}
where $p,q$ are idempotent elements in $\mathcal{S}^{1}$ and
$\mathcal{S}^{1}$ denotes $\mathcal{S}$ if $\mathcal{S}$ has the
identity, and $\mathcal{S}$ with the identity adjoined in the
other case. Jones introduced this relation for the setting of an
arbitrary semigroup. If $\mathcal{S}$ is a regular semigroup, then
it is known (see \cite{Mitsch}) that $\leq_{J}$ is a partial order
on $\mathcal{S}$. We will now introduce a new relation on a module $M$
that is analogous to (\ref{Jones}) and then show that when $M$ is
a regular module this new relation is the minus partial order on
$M$.

\begin{defn}
Let $M$ be a module and $m_1,m_2 \in M$. We write $m_1\leq_{J}m_2$
if there exist idempotent elements $f \in S$ and $a \in R$ such
that $m_1=fm_2=m_2 a$.
\end{defn}

\begin{lem}\label{sait} Let $M$ be a module and $m_1, m_2\in M$. If $m_1\leq^{-} m_2$,
then  there exist $f^2=f\in S$, $a^2=a\in R$ such that
$m_1=fm_1=fm_2=m_1a=m_2a$.
\end{lem}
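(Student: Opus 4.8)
The plan is to unwind the definition of $m_1 \leq^{-} m_2$ and produce the required idempotents directly, just as in the first half of the proof of Theorem \ref{main}. Since $m_1 \leq^{-} m_2$, there is some $\varphi \in M^*$ with $m_1 = m_1 \varphi m_1$, $m_1 \varphi = m_2 \varphi$ (an equality in $S$), and $\varphi m_1 = \varphi m_2$ (an equality in $R$). The natural candidates are $f = m_1 \varphi \in S$ and $a = \varphi m_1 \in R$; by Remark \ref{burcu} (and the analogous computation in $R$) these are idempotents, $f^2 = f$ and $a^2 = a$, since $f^2 = m_1\varphi m_1 \varphi = m_1 \varphi = f$ using $m_1 = m_1\varphi m_1$, and similarly for $a$.

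Next I would verify the chain of equalities $m_1 = fm_1 = fm_2 = m_1 a = m_2 a$. The first equality $m_1 = fm_1 = m_1 \varphi m_1$ is just the regularity relation. For $fm_2$: $fm_2 = m_1 \varphi m_2 = m_1 \varphi m_1 = m_1$, where I used $\varphi m_2 = \varphi m_1$ (applied to get $m_1 \varphi m_2 = m_1(\varphi m_2) = m_1(\varphi m_1)$). For $m_1 a$: $m_1 a = m_1 \varphi m_1 = m_1$. For $m_2 a$: $m_2 a = m_2 \varphi m_1 = m_1 \varphi m_1 = m_1$, where I used $m_2 \varphi = m_1 \varphi$ (as elements of $S$, so $m_2 \varphi m_1 = (m_2\varphi)m_1 = (m_1\varphi)m_1$). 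Assembling these gives $m_1 = fm_1 = fm_2 = m_1 a = m_2 a$, which is exactly the claim.

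There is essentially no obstacle here: the statement is strictly weaker than what was already established inside the proof of Theorem \ref{main}, since we are not even claiming the annihilator conditions $l_S(m_1) = l_S(f)$ and $r_R(m_1) = r_R(a)$, only the four displayed equalities. The only point requiring a little care is keeping straight which equality ($m_1\varphi = m_2\varphi$ in $S$ versus $\varphi m_1 = \varphi m_2$ in $R$) is invoked at each step, and making sure multiplication is associated correctly when mixing elements of $S$, $M$, and $R$; the bimodule structure on $M$ makes all the needed manipulations legitimate.
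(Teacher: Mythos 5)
Your proof is correct and follows essentially the same route as the paper: the paper invokes Theorem \ref{main} to get idempotents $f,a$ with the annihilator equalities and then reads off the displayed identities, while you simply construct the same $f=m_1\varphi$ and $a=\varphi m_1$ and verify the identities by direct computation, bypassing the annihilator conditions. Both arguments are just unwindings of Definition \ref{ilk}, and your version is, if anything, slightly more self-contained.
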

\begin{proof} If $m_1\leq^{-} m_2$, then there exist $f^2=f\in S$, $a^2=a\in R$ such that
$l_S(m_1)=l_S(f)$, $r_R(m_1)=r_R(a)$, $fm_1=fm_2$, and $m_1a=m_2a$.
Hence $(1_S-f)m_1=0$ and $m_1(1_R-a)=0$, and therefore
$m_1=fm_1=fm_2=m_1a=m_2a$.
\end{proof}

\begin{thm}
\label{equiv-1} Let $M$ be a regular module and $m_1, m_2\in M$.
Then $m_1\leq_{J}m_2$ if and only if $m_1\leq^{-} m_2$.
\end{thm}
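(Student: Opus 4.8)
The plan is to treat the two implications separately and, in each direction, reduce immediately to a result already proved in the excerpt, so that essentially no new computation is required.

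For the implication $m_1\leq^{-}m_2\Rightarrow m_1\leq_{J}m_2$, I would just quote Lemma~\ref{sait}: it supplies idempotents $f\in S$ and $a\in R$ with $m_1=fm_1=fm_2=m_1a=m_2a$. In particular $m_1=fm_2$ and $m_1=m_2a$, which is exactly the defining condition of $m_1\leq_{J}m_2$. Nothing beyond invoking the lemma is needed here.

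For the converse $m_1\leq_{J}m_2\Rightarrow m_1\leq^{-}m_2$, the strategy is to show that the \emph{same} pair of idempotents furnished by the $\leq_{J}$ relation satisfies the (relaxed-annihilator) criterion of Theorem~\ref{elif}. So suppose $m_1=fm_2=m_2a$ with $f=f^{2}\in S$ and $a=a^{2}\in R$. Applying $f$ on the left to $m_1=fm_2$ and using $f^{2}=f$ gives $fm_1=fm_2$ (both equal $m_1$); symmetrically, multiplying $m_1=m_2a$ on the right by $a$ gives $m_1a=m_2a$. For the annihilator inclusions: if $g\in l_{S}(f)$, then $gm_1=g(fm_2)=(gf)m_2=0$, so $l_{S}(f)\subseteq l_{S}(m_1)$; dually, if $x\in r_{R}(a)$, then $m_1x=(m_2a)x=m_2(ax)=0$, so $r_{R}(a)\subseteq r_{R}(m_1)$. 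Since $M$ is regular, Theorem~\ref{elif} now applies and yields $m_1\leq^{-}m_2$.

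I do not expect a real obstacle: the content of the statement is that the Jones-type description and the relaxed-annihilator description of Theorem~\ref{elif} coincide, and the bridge is the simple observation that $fm_2=m_2a=m_1$ forces $f$ to act as the identity on $m_1$ from the left and $a$ from the right. The only points to watch are bookkeeping (which idempotent acts via the left $S$-action and which via the right $R$-action), and making sure the regularity hypothesis is genuinely used — it is, since it is precisely what makes Theorem~\ref{elif} available; without it the argument only delivers the four relaxed conditions, not the equivalence with $\leq^{-}$.
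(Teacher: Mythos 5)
Your proof is correct and follows essentially the same route as the paper: the forward direction by quoting Lemma~\ref{sait}, and the converse by checking that the idempotents from the $\leq_{J}$ relation satisfy the relaxed annihilator inclusions $l_{S}(f)\subseteq l_{S}(m_1)$, $r_{R}(a)\subseteq r_{R}(m_1)$ together with $fm_1=fm_2$ and $m_1a=m_2a$, and then invoking Theorem~\ref{elif}. The only difference is that you spell out the small idempotent computations that the paper dismisses as ``clearly''.
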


\begin{proof}
If $m_1\leq^{-} m_2$, then $m_1\leq_{J}m_2$ by Lemma \ref{sait}. Conversely,
let $m_1\leq_{J}m_2$. Then $m_1=fm_2=m_2a$ for some $f^{2}=f \in
S$, $a^{2}=a\in R$. Clearly, $fm_1=fm_2$ and $m_1a=m_2a$. From
$m_1 =fm_2$ we get $l_{S}(f)\subseteq l_{S}(m_1)$ and by
$m_1=m_2a$ we obtain $r_{R }(a)\subseteq r_{R}(m_1)$. Therefore an
application of Theorem \ref{elif} completes the proof.
\end{proof}

In \cite{Mitsch}, Mitsch generalized Hartwig's definition of the
minus partial order to arbitrary semigroups. The definition
follows: Suppose $a,b$ are two elements of an arbitrary semigroup
$\mathcal{S}$. Then we write
\begin{equation}
a\leq_{M}b\text{\quad if\quad}a=xb=by \text{ and }xa=a \label{MitschDefMinus}%
\end{equation}
for some elements $x,y\in\mathcal{S}^{1}$. Mitsch proved that
$\leq_{M}$ is indeed a partial order for any semigroup
$\mathcal{S}$ and that for $a,b\in\mathcal{S}$,
\begin{equation} \label{MitschMinusEquiv}
a\leq_{M}b\text{\quad if and only if\quad}a=xb=by\text{ and
}xa=ay=a
\end{equation}
for some elements $x,y\in\mathcal{S}^{1}$. Following the above
ideas, we now introduce a new relation $\leq_{M}$ on  a module $M$.

\begin{defn}
\label{Definition_Mitsch_power_set} Let $M$ be a module and
$m_1,m_2 \in M$. We write $m_1\leq _{M}m_2$ if there exist $f \in
S$ and $a \in R$ such that $m_1 =m_2a=fm_2$ and $m_1=fm_1$.
\end{defn}

We will show that when $M$ is a regular module, $\leq_{M}$ is again
the minus partial order on $M$. First, we present an auxiliary result which was inspired by (\ref{MitschMinusEquiv}).

\begin{lem}\label{brc} Let $M$ be a module and $m_1,m_2 \in M$.
Then $m_1\leq _{M}m_2$ if and only if there exist $f \in S$ and $a
\in R$ such that
$${m_1 =m_2a=fm_2 \quad \text{and} \quad m_1=fm_1=m_1a.}$$
\end{lem}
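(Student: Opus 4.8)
The plan is to prove the biconditional in Lemma \ref{brc} by establishing the nontrivial direction only, since the reverse implication is immediate: if $m_1 = m_2a = fm_2$ and $m_1 = fm_1 = m_1a$, then in particular $m_1 = m_2a = fm_2$ and $m_1 = fm_1$, which is exactly Definition \ref{Definition_Mitsch_power_set}. So the work is to show that $m_1 \leq_M m_2$ already forces the two extra equalities $m_1 = fm_1$ (which we have) and $m_1 = m_1 a$; that is, given $f \in S$, $a \in R$ with $m_1 = m_2 a = fm_2$ and $m_1 = fm_1$, we must produce (possibly new) $f$ and $a$ witnessing all of $m_1 = m_2 a = fm_2$, $m_1 = fm_1$, and $m_1 = m_1 a$. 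This mirrors exactly the passage from (\ref{MitschDefMinus}) to (\ref{MitschMinusEquiv}) in the semigroup setting.

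First I would start from the data $m_1 = m_2 a = fm_2 = fm_1$ with $f\in S$ arbitrary (not assumed idempotent) and $a \in R$ arbitrary. The issue is the right-hand element: we have $m_1 = m_2 a$ but not yet $m_1 = m_1 a$. The natural fix is to iterate: since $m_1 = m_2 a$, right-multiplying by $a$ gives $m_1 a = m_2 a^2$, and more generally $m_1 a^{k} = m_2 a^{k+1}$, so none of these is obviously $m_1$ unless $a$ is better behaved. The cleaner route, following Mitsch, is to replace $a$ by a suitable power or by an idempotent manufactured from it. Concretely, I would first observe $m_1 = m_2 a$ implies $m_1 = fm_2 = fm_2 a \cdot$ — wait, more usefully: from $m_1 = m_2a$ and $m_1 = fm_2$ we get $m_1 a = m_2 a \cdot a$ and $m_1 = fm_2$; combining $m_1 = fm_2$ with $fm_1 = m_1$ gives $f m_2 = m_1 = f m_1 = f(m_2 a) = (fm_2) a = m_1 a$? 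No — $f(m_2 a)$ need not equal $(fm_2)a$ unless $f$ commutes appropriately, but it does: $f \in S = \mathrm{End}_R(M)$ acts $R$-linearly, so $f(m_2 a) = (f m_2) a$ for $a \in R$. Hence $m_1 a = (fm_2)a = f(m_2 a) = f m_1 = m_1$. That is the key calculation, and it uses nothing but $R$-linearity of $f$ together with $m_1 = m_2 a$, $m_1 = fm_2$, $m_1 = fm_1$. So in fact the \emph{same} $f$ and $a$ work, with no replacement needed.

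So the key steps, in order, are: (i) state the reverse implication and dismiss it as immediate from Definition \ref{Definition_Mitsch_power_set}; (ii) assume $m_1 \leq_M m_2$, unpack it as $f \in S$, $a \in R$ with $m_1 = m_2 a = fm_2$ and $m_1 = fm_1$; (iii) compute $m_1 a = (fm_2)a = f(m_2 a) = f m_1 = m_1$, invoking that $f$ is a right $R$-module endomorphism so it commutes with the right action of $a\in R$; (iv) conclude that this $f, a$ witness $m_1 = m_2 a = fm_2$ and $m_1 = fm_1 = m_1 a$, as required. I do not expect a genuine obstacle here: the only subtlety is recognizing that $f(m_2 a) = (fm_2)a$ is exactly the $(S,R)$-bimodule compatibility of $M$ recalled in the introduction, and once that is used the extra equality $m_1 = m_1 a$ falls out for free from the same idempotent-free data, paralleling the semigroup equivalence (\ref{MitschMinusEquiv}). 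If one wanted the witnesses to additionally be idempotents one would have to do more, but Lemma \ref{brc} as stated asks only for $f \in S$ and $a \in R$, so no idempotency argument is needed.
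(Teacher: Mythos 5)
Your proposal is correct and matches the paper's own argument: the reverse direction is dismissed as immediate, and the forward direction rests on exactly the computation $m_1a=(fm_2)a=f(m_2a)=fm_1=m_1$ using the $R$-linearity of $f\in S$. The same witnesses $f$ and $a$ suffice, just as in the paper.
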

\begin{proof} Let  $m_1\leq _{M}m_2$. Then there exist $f \in
S$ and $a \in R$ such that $m_1 =m_2a=fm_2$ and $m_1=fm_1$. Hence
$m_1a=(fm_2)a=f(m_2a)=fm_1=m_1$. The converse is obvious.
\end{proof}

\begin{thm} \label{Mitsch_thm}
Let $M$ be a regular module and $m_1,m_2 \in M$. Then
$m_1\leq^{-}m_2$ if and only if $m_1\leq_{M}m_2$.
\end{thm}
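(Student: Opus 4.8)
The plan is to establish the two implications separately, leaning on the characterizations already developed. For the forward direction, suppose $m_1 \leq^{-} m_2$. By Lemma~\ref{sait} there exist idempotents $f^2 = f \in S$ and $a^2 = a \in R$ with $m_1 = fm_1 = fm_2 = m_1 a = m_2 a$. In particular $m_1 = fm_2 = m_2 a$ and $m_1 = fm_1$, which is exactly Definition~\ref{Definition_Mitsch_power_set}, so $m_1 \leq_{M} m_2$. This direction does not even require regularity of $M$.

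For the converse, assume $m_1 \leq_{M} m_2$. By Lemma~\ref{brc} there exist $f \in S$ and $a \in R$ (not a priori idempotent) with $m_1 = m_2 a = fm_2$ and $m_1 = fm_1 = m_1 a$. The idea is to feed this data into Theorem~\ref{elif}, but that theorem requires \emph{idempotent} $f$ and $a$, whereas here $f$ and $a$ are merely elements satisfying $fm_1 = m_1$ and $m_1 a = m_1$. So the first task is to replace $f$ and $a$ by genuine idempotents without destroying the relevant equations. Using regularity of $M$, write $m_1 = m_1 \varphi m_1$ for some $\varphi \in M^*$, and set $e = m_1 \varphi \in S$ and $c = \varphi m_1 \in R$; these are idempotents (Remarks~\ref{decomposition} and~\ref{burcu}) with $em_1 = m_1 = m_1 c$. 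I would then check that $e$ and $c$ already do the job: from $m_1 = fm_2$ we get $em_2 = em_1 \cdot(\text{something})$—more precisely, since $m_1 = em_1$ and $m_1 = fm_2$, one computes $em_2 = e(fm_2)\cdot$—wait, that is circular; instead use $m_1 = fm_2$ directly together with $m_1 = em_1$ to obtain $em_2 \overset{?}{=} m_1$: indeed $e m_2$ need not equal $m_1$. The cleaner route is to verify the hypotheses of Theorem~\ref{elif} for the \emph{original} $f, a$ after idempotent-izing only the annihilator conditions: set $f' = ef$ and note $f'^2 = f'$ is false in general too.

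So the honest plan is: from $m_1 = fm_2 = m_2 a$ deduce the containments $l_S(f) \subseteq l_S(m_1)$ and $r_R(a) \subseteq r_R(m_1)$ directly (if $gf = 0$ then $gm_1 = gfm_2 = 0$; if $ax = 0$ then $m_1 x = m_2 a x = 0$), and from $m_1 = fm_1 = m_1 a$ deduce $fm_1 = m_1 = fm_2 \cdot$—here note $fm_2 = m_1 = fm_1$, so $fm_1 = fm_2$, and $m_1 a = m_1 = m_2 a \cdot$—again $m_2 a = m_1 = m_1 a$, so $m_1 a = m_2 a$. Now to apply Theorem~\ref{elif} I still need $f$ and $a$ idempotent. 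The fix is to replace $f$ by $f^2$? No—instead observe that Theorem~\ref{elif}'s proof only uses $f, a$ through the equations $fm_1 = fm_2$, $m_1 a = m_2 a$ and the annihilator containments, and re-derives idempotents $f_1 = m_1\varphi$, $a_1 = \varphi m_1$ internally; re-reading it, it does literally assume $f^2 = f$ and $a^2 = a$. The main obstacle, then, is manufacturing idempotents from $f, a$: I would set $f' = m_1 \varphi$ and $a' = \varphi m_1$ with $m_1 = m_1\varphi m_1$, which are idempotent, satisfy $f' m_1 = m_1$, $m_1 a' = m_1$, hence $f'm_1 = f'm_2$ and $m_1 a' = m_2 a'$ would require $f'm_2 = m_1$, which is \emph{not} given.

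Therefore the workable plan is to bypass Theorem~\ref{elif} and instead show directly that $m_1 \leq_M m_2$ implies $m_1 \leq_J m_2$, then invoke Theorem~\ref{equiv-1}. Given $m_1 = m_2 a = fm_2$ with $m_1 = fm_1 = m_1 a$ (Lemma~\ref{brc}), regularity gives $m_1 = m_1\varphi m_1$; put $g = m_1\varphi f \in S$ and $h = a\varphi m_1 \in R$. Then $g^2 = m_1\varphi f m_1 \varphi f = m_1 \varphi m_1 \varphi f = m_1 \varphi f = g$ using $fm_1 = m_1$, so $g$ is idempotent, and similarly $h^2 = h$ using $m_1 a = m_1$. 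Moreover $gm_2 = m_1\varphi f m_2 = m_1 \varphi m_1 = m_1$ and $m_2 h = m_2 a \varphi m_1 = m_1 \varphi m_1 = m_1$, so $m_1 = gm_2 = m_2 h$ with $g, h$ idempotent, i.e. $m_1 \leq_J m_2$. By Theorem~\ref{equiv-1}, $m_1 \leq^{-} m_2$. The only subtle point—the step I expect to be the real content—is this construction of the idempotents $g$ and $h$ from the Mitsch data, exploiting the extra equations $fm_1 = m_1$ and $m_1 a = m_1$ supplied by Lemma~\ref{brc}; everything else is routine bookkeeping.
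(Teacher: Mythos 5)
Your final plan is correct, and its key construction coincides with the paper's: the idempotents $g=m_1\varphi f$ and $h=a\varphi m_1$ that you build from the Mitsch data (using $fm_1=m_1$ and, via Lemma \ref{brc}, $m_1a=m_1$ to get $g^2=g$ and $h^2=h$) are exactly the elements $gf$ and $ab$ (with $g=m_1\varphi$, $b=\varphi m_1$) appearing in the paper's proof. The only real difference is the endgame: the paper verifies the annihilator containments $l_S(gf)\subseteq l_S(m_1)$ and $r_R(ab)\subseteq r_R(m_1)$ together with $gfm_1=gfm_2$ and $m_1ab=m_2ab$ and then invokes Theorem \ref{elif}, whereas you check the single identity $m_1=gm_2=m_2h$ and invoke the Jones characterization (Theorem \ref{equiv-1}) --- a marginally shorter finish that lands in the same place, since Theorem \ref{equiv-1} itself rests on Theorem \ref{elif}; in a written version you should of course delete the exploratory false starts and keep only this final argument.
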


\begin{proof}
Let $m_1\leq^{-}m_2$. Then by Theorem \ref{equiv-1} there exist
$f^2=f\in S$ and $a^2=a\in R$ with $m_1=fm_2=m_2a$. It follows,
$(1_S-f)m_1=0$ and so $m_1=fm_1$. Hence $m_1\leq _{M}m_2$.

Conversely, let $m_1\leq_{M}m_2$. Then there exist $f\in S$ and
$a\in R$ with $m_1=fm_2=m_2a$ and $m_1=fm_1$. Since $M$ is a
regular module, there exists $\varphi \in M^*$ such that
$m_1=m_1\varphi m_1$. Let $g=m_1\varphi \in S$ and $b=\varphi
m_1\in R$. Obviously, $g^2=g$ and $b^2=b$. Also, clearly,
$l_S(m_1)=l_S(g)$ and $r_R(m_1)=r_R(b)$. Equation $m_1=fm_1$
implies $(1_S-f)m_1=0$. Therefore $(1_S-f)g=0$ and
hence $g=fg$. It follows $(gf)^2=gf$. Also, $m_1a=m_1$ by Lemma
\ref{brc}. Then $m_1(1_R-a)=0$ and so $b(1_R-a)=0$
which implies $b=ba$. Hence $(ab)^2=ab$. We now take into
account the following items:

\begin{enumerate}
\item[(i)] Let $x\in l_S(gf)$. Then $0=xgf=xm_1\varphi f$ and so $$0=xm_1\varphi fm_1=xm_1\varphi
m_1=xm_1.$$ Hence $x\in l_S(m_1)$ and thus $l_S(gf)\subseteq
l_S(m_1)$.

\item[(ii)] Let $x\in r_R(ab)$. Then $0=abx=a\phi m_1x$ and so $$0=m_2a\varphi m_1x=m_1\varphi
m_1x=m_1x.$$ Hence $x\in r_R(m_1)$ and thus $r_R(ab)\subseteq
r_R(m_1)$.

\item[(iii)] $gfm_1=gm_1=gfm_2$.

\item[(iv)] $m_1ab=m_1b=m_2ab$.
\end{enumerate}
By Theorem \ref{elif}, we may conclude that $m_1\leq^{-}m_2$.
\end{proof}

\begin{cor} Let $M$ be a regular module and $m_1,m_2 \in M$.
The following statements are then equivalent:
\begin{enumerate}
\item[{\rm(i)}] $m_1\leq^{-}m_2$;
\item[{\rm(ii)}] There exist $g \in S$ and $b \in R$ such that $m_1 =m_2b=gm_2$ and
$m_1=m_1b$.
\end{enumerate}
\end{cor}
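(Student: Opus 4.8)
The plan is to recognize statement (ii) as a mild variant of the Mitsch-type relation $\leq_M$ of Definition~\ref{Definition_Mitsch_power_set}, and then to deduce the equivalence from Theorem~\ref{Mitsch_thm} together with Lemma~\ref{brc}, rather than re-deriving everything from scratch. The single observation that drives the argument is that in (ii) the ``left'' identity $m_1 = g m_1$ is \emph{automatic}: from $m_1 = m_2 b = g m_2$ and $m_1 = m_1 b$ one computes $g m_1 = g(m_2 b) = (g m_2) b = m_1 b = m_1$. Hence the data in (ii) already include everything that appears in Lemma~\ref{brc} (with the roles $f := g$, $a := b$), and no idempotency of $g$ or $b$ is required, which is compatible with the hypotheses of Lemma~\ref{brc} and Theorem~\ref{Mitsch_thm}.

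For the direction (i) $\Rightarrow$ (ii), I would start from $m_1 \leq^- m_2$, apply Theorem~\ref{Mitsch_thm} to get $m_1 \leq_M m_2$, and then invoke Lemma~\ref{brc} to obtain $f \in S$ and $a \in R$ with $m_1 = m_2 a = f m_2$ and $m_1 = f m_1 = m_1 a$. Setting $g := f$ and $b := a$ immediately gives $m_1 = m_2 b = g m_2$ and $m_1 = m_1 b$, i.e. (ii).

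For (ii) $\Rightarrow$ (i), given $g \in S$ and $b \in R$ as in (ii), the computation of the first paragraph yields $m_1 = g m_1$ in addition to the assumed identities, so that $m_1 = m_2 b = g m_2$ and $m_1 = g m_1 = m_1 b$. By the ``if'' direction of Lemma~\ref{brc} this means $m_1 \leq_M m_2$, and Theorem~\ref{Mitsch_thm} then gives $m_1 \leq^- m_2$.

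I do not expect any genuine obstacle: the corollary is a two-line consequence of the preceding results. The only thing that needs a moment's attention is not to be thrown off by the asymmetry of the hypotheses in (ii) --- one might be tempted to look for an extra condition on the $S$-side, but it is already forced --- and to be careful that $g$ and $b$ are not assumed idempotent, so that Lemma~\ref{brc} may be applied exactly as stated.
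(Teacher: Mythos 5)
Your proposal is correct and follows essentially the same route as the paper: the key computation $gm_1=g(m_2b)=(gm_2)b=m_1b=m_1$ for (ii)\,$\Rightarrow$\,(i) and the appeal to Lemma~\ref{brc} and Theorem~\ref{Mitsch_thm} for (i)\,$\Rightarrow$\,(ii) are exactly the paper's argument.
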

\begin{proof}(i) $\Rightarrow$  (ii) Clear by Theorem \ref{Mitsch_thm} and Lemma \ref{brc}. \\
(ii) $\Rightarrow$ (i) There exist $g \in S$ and $b \in R$ such
that $m_1 =m_2b=gm_2$ and $m_1=m_1b$. We have
$$gm_1=g(m_2b)=(gm_2)b=m_1b=m_1$$ and thus (i) holds.
\end{proof}

With the next result, which follows directly by Lemma \ref{sait}, we will present a relation between annihilators of
elements $m_1$ and $m_2$ of a module $M$ where $m_1$ and $m_2$ are connected by the
minus partial order.

\begin{prop}\label{isil} Let $M$ be a  module and $m_1, m_2 \in M$. If
$m_1\leq^{-}m_2$, then $l_{S}(m_2)\subseteq l_{S}(m_1)$ and
$r_{R}(m_2)\subseteq r_{R}(m_1)$.
\end{prop}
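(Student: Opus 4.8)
The plan is to invoke Lemma~\ref{sait} directly, as the statement of Proposition~\ref{isil} advertises. Assume $m_1\leq^{-}m_2$. By Lemma~\ref{sait} there exist idempotents $f^2=f\in S$ and $a^2=a\in R$ such that
$$m_1=fm_1=fm_2=m_1a=m_2a.$$
The key observation is that $m_1$ is now expressed both as $fm_2$ (a left multiple of $m_2$ by an element of $S$) and as $m_2a$ (a right multiple of $m_2$ by an element of $R$).

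From here the two inclusions are immediate. For the left annihilator: let $g\in l_S(m_2)$, so $gm_2=0$. Then $gm_1=g(fm_2)$; rewriting, $gm_1=gfm_2$, and since $gf\in S$ while $gm_2=0$ need not directly give $gfm_2=0$, I should instead write $gm_1=gfm_2$ — wait, the cleaner route is $gm_1 = g(fm_2)$, but $f$ sits on the wrong side. So instead use $m_1=fm_2$ read as: for $g\in l_S(m_2)$, $gm_2=0$ forces $g(fm_2)$... the issue is $f$ is between $g$ and $m_2$ only if we write $gfm_2$, which is $g\cdot(fm_2)=gm_1$, and this equals $(gf)m_2$. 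That is not obviously $0$. The correct manipulation: from $m_1=fm_2$ we get, for $g\in l_S(m_2)$, that $gm_1=gfm_2=(gf)m_2$, and indeed $(gf)m_2 = gf m_2$; hmm, this requires $gf\in l_S(m_2)$ which we do not have. The right way is to use $m_1 = f m_2$ together with $f\in S$ acting on the left: if $g m_2 = 0$ then $g f m_2$ is not controlled. \emph{Reconsider:} the clean identity is $m_1 = f m_2$, so $g m_1 = g f m_2$; but we want this to vanish. It vanishes if instead we had $m_1 = m_2 s$ for some $s$ — and we do: $m_1 = m_2 a$. Then for $g\in l_S(m_2)$, $g m_1 = g(m_2 a) = (g m_2) a = 0$, so $g\in l_S(m_1)$. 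Thus $l_S(m_2)\subseteq l_S(m_1)$. Symmetrically, for the right annihilator, use $m_1 = f m_2$: if $b\in r_R(m_2)$, i.e.\ $m_2 b = 0$, then $m_1 b = (f m_2) b = f(m_2 b) = 0$, so $b\in r_R(m_1)$, giving $r_R(m_2)\subseteq r_R(m_1)$.

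There is essentially no obstacle here — the entire content is packaged in Lemma~\ref{sait}, and the only subtlety is choosing, for each of the two annihilator inclusions, the \emph{right} one of the two factorizations $m_1 = f m_2$ or $m_1 = m_2 a$ so that the annihilating element ends up adjacent to the factor it kills. Note in particular that regularity of $M$ is not needed for this proposition, only that $m_1\leq^{-}m_2$ holds for the two given elements, consistent with the hypothesis ``Let $M$ be a module'' rather than ``regular module'' in the statement. I would write the final proof in three short lines: cite Lemma~\ref{sait} for the factorizations, then $g\in l_S(m_2)\Rightarrow gm_1 = (gm_2)a = 0$, and $b\in r_R(m_2)\Rightarrow m_1 b = f(m_2 b)=0$.

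\begin{proof}
Assume $m_1\leq^{-}m_2$. By Lemma~\ref{sait}, there exist $f^2=f\in S$ and $a^2=a\in R$ with $m_1=fm_2=m_2a$. If $g\in l_S(m_2)$, then $gm_1=g(m_2a)=(gm_2)a=0$, so $g\in l_S(m_1)$; hence $l_S(m_2)\subseteq l_S(m_1)$. If $b\in r_R(m_2)$, then $m_1b=(fm_2)b=f(m_2b)=0$, so $b\in r_R(m_1)$; hence $r_R(m_2)\subseteq r_R(m_1)$.
\end{proof}
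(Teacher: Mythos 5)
Your final proof is correct and is exactly the argument the paper intends: the paper gives no separate proof, stating only that the proposition ``follows directly by Lemma \ref{sait}'', and your use of the factorizations $m_1=m_2a$ (for the left annihilator) and $m_1=fm_2$ (for the right annihilator), together with the $(S,R)$-bimodule associativity, is the direct elaboration of that remark. The exploratory back-and-forth before your final three-line proof could simply be deleted; the proof itself is clean and needs no regularity hypothesis on $M$, consistent with the statement.
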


The following example shows that the converse statement of
Proposition \ref{isil}  need not hold in general.

\begin{ex}{\rm
Let $\mathbb{Z}_{10}$ denote the ring of integers modulo $10$. It
is known that End$_{\mathbb Z}(\mathbb Z_{10})\cong \mathbb Z_{10}$. Since
$\mathbb{Z}_{10}$ is a semisimple $\mathbb{Z}$-module, it is a
regular module. Note that the nontrivial idempotents of
$\mathbb{Z}_{10}$ are $\overline{5}$ and $\overline{6}$. Consider
the elements $m_1 =\overline{2}$ and $m_2=\overline{6}$ of
$\mathbb{Z}_{10}$.
Then, on the one hand, $l_{_{\mathbb{Z}_{10}}}(m_1)=l_{_{\mathbb{Z}%
_{10}}}(m_2)=\{\overline{0},\overline{5}\}$ and
$r_{_{\mathbb{Z}}}(m_1)=r_{_{\mathbb{Z}}}(m_2)=5\mathbb Z$. On the
other hand, there are no idempotents $f \in$ End$_{\mathbb Z}(\mathbb
Z_{10})$ and $a \in \mathbb{Z}$ such that $m_1 =fm_2=m_2a$. Thus,
$m_1\not \leq ^{-}m_2$. }
\end{ex}

In \cite{BJPS}, Blackwood at al. defined a partial order
$\leq^{\oplus}$ in the following way: $$a \leq^{\oplus} b\quad \text{if} \quad
bR = aR \oplus (b-a)R$$ and called it  \textit{the direct sum partial
order}. The direct sum partial order is known to be
equivalent to the minus partial order on von Neumann regular
rings (see \cite[Lemma 3]{BJPS}). Motivated by the notion of the direct sum partial order on (regular) rings, we will conclude the paper by introducing a new relation on a module $M$ and then show that this relation is the minus partial order when the module $M$ is regular.

\begin{defn}\label{DirectSumDef} Let $M$ be a module and $m_1, m_2\in M$.
We write $$m_1\leq^{\oplus} m_2 \quad if \quad m_2 R = m_1 R \oplus
(m_2 -m_1)R.$$
\end{defn}

\begin{thm}\label{hoca} Let $M$ be a module and $m_1, m_2\in M$ with $m_1$ and $m_2$ regular.
 Then the following statements are equivalent:
\begin{enumerate}
\item[{\rm(i)}] $m_1\leq^{-} m_2$;
\item[{\rm(ii)}] $m_1\leq^{\oplus} m_2$.
\end{enumerate}
\end{thm}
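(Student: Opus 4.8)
The plan is to prove both implications directly, exploiting the characterizations of $\leq^-$ already established, especially Lemma \ref{sait} and Remark \ref{decomposition}. For the implication (i) $\Rightarrow$ (ii), I would start from $m_1 \leq^- m_2$ and invoke Lemma \ref{sait} to obtain idempotents $f^2 = f \in S$ and $a^2 = a \in R$ with $m_1 = fm_1 = fm_2 = m_1 a = m_2 a$. The key computation is to show the sum $m_1 R + (m_2 - m_1)R$ is direct and equals $m_2 R$. For directness: if $x \in m_1 R \cap (m_2 - m_1)R$, write $x = m_1 r = (m_2 - m_1)s$; applying $f$ and using $fm_1 = fm_2 = m_1$ gives $m_1 r = f(m_1 r) = f(m_2 - m_1)s = (fm_2 - fm_1)s = 0$, so $x = 0$. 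For the spanning: since $m_2 = m_1 + (m_2 - m_1)$, we have $m_2 R \subseteq m_1 R + (m_2 - m_1)R$; the reverse containment needs $m_1 R \subseteq m_2 R$, which is Proposition \ref{subset}, and $(m_2 - m_1)R \subseteq m_2 R$, which follows again from $m_1 R \subseteq m_2 R$. Hence $m_2 R = m_1 R \oplus (m_2 - m_1)R$, i.e. $m_1 \leq^\oplus m_2$.

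For the converse (ii) $\Rightarrow$ (i), assume $m_2 R = m_1 R \oplus (m_2 - m_1)R$. Here I would use the regularity of $m_2$: by Remark \ref{decomposition}, pick $\psi \in M^*$ with $m_2 = m_2 \psi m_2$, so $e = \psi m_2$ is idempotent in $R$ and $M = m_2 R \oplus N$ with $N = \{n : m_2 \psi n = 0\}$; moreover $m_2 \psi \in S$ is an idempotent (Remark \ref{burcu}) acting as the identity on $m_2 R$. The idea is to use the given direct sum decomposition of $m_2 R$ to build a suitable $\varphi \in M^*$ witnessing $m_1 \leq^- m_2$. Concretely, I expect to define an idempotent $f \in S$ whose image is $m_1 R$ and which kills $(m_2 - m_1)R$ and also $N$ (using the ambient decomposition $M = m_1 R \oplus (m_2 - m_1)R \oplus N$), so that $fm_2 = m_1$ and $fm_1 = m_1$; dually, since $m_1 \in m_2 R$ we can write $m_1 = m_2 c$ for some $c \in R$, and regularity of $m_1$ lets us produce an idempotent $a \in R$ with $m_1 a = m_1$, $m_1 R = $ image data matching, and $m_2 a = m_1$. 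Then Theorem \ref{elif} (or directly Theorem \ref{equiv-1}, once we exhibit $m_1 = fm_2 = m_2 a$ with $f, a$ idempotent) finishes the proof.

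The main obstacle I anticipate is the converse direction, specifically producing the right-hand idempotent $a \in R$ with $m_2 a = m_1$. Getting the $S$-side idempotent $f$ is comparatively clean because the direct sum decomposition $M = m_1 R \oplus (m_2 - m_1)R \oplus N$ directly yields an $R$-endomorphism projecting onto $m_1 R$. The $R$-side is more delicate: we need an idempotent $a \in R$, not merely an element, with $m_1 = m_2 a$, and the decomposition we are handed is of the submodule $m_2 R$, not of $R$ itself. The trick will be to transport the splitting through the isomorphism $m_2 R \cong eR$ from Remark \ref{decomposition} (where $e = \psi m_2$), so that the decomposition $m_2 R = m_1 R \oplus (m_2 - m_1)R$ corresponds to a decomposition $eR = I_1 \oplus I_2$ of right ideals of $R$ inside $eR$; then $I_1$ is generated by an idempotent $a$ (as a direct summand of the projective $eR$, hence of $R_R$), and one checks $m_2 a$ lands in $m_1 R$ and recovers $m_1$ by composing the isomorphisms carefully. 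Once $f$ and $a$ are in hand with $m_1 = fm_2 = m_2 a$, antisymmetry and the rest are immediate from Theorem \ref{equiv-1}, so the real work is this one construction; I would present it as the heart of the proof and keep the bookkeeping about which isomorphism sends what to minimal but explicit.
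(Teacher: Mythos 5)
Your proposal is correct, but the converse direction takes a genuinely different route from the paper's. In the direction (i) $\Rightarrow$ (ii) the two arguments essentially coincide: you obtain directness of $m_1R+(m_2-m_1)R$ by applying the idempotent $f$ from Lemma \ref{sait} to an element of the intersection, while the paper applies $m_1\varphi$ and observes that $(m_2-m_1)R$ lies in the complement $T$ of $m_1R$ furnished by Remark \ref{decomposition}; this is the same computation in different clothing. In the direction (ii) $\Rightarrow$ (i), however, the paper constructs no idempotents at all: it takes $\varphi_2\in M^*$ with $m_2=m_2\varphi_2 m_2$ and $r\in R$ with $m_1=m_2r$, shows that $m_1\varphi_2 m_1-m_1$ and $m_1\varphi_2(m_2-m_1)-\bigl((m_2-m_1)-(m_2-m_1)\varphi_2(m_2-m_1)\bigr)$ lie in $m_1R\cap(m_2-m_1)R=\{0\}$, and then verifies directly that $x=\varphi_2 m_1\varphi_2$ witnesses Definition \ref{ilk}. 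Your route instead builds $f\in S$ as the projection onto $m_1R$ in $M=m_1R\oplus(m_2-m_1)R\oplus N$ and transports the splitting of $m_2R$ through the isomorphism $m_2R\cong eR$ of Remark \ref{decomposition} to produce an idempotent $a\in R$ with $m_2a=m_1$; this is completable as sketched (writing $1_R=a+b+d$ with $a\in ecR$, $b\in(e-ec)R$, $d\in(1_R-e)R=r_R(m_2)$ where $m_1=m_2c$, and invoking uniqueness of the decomposition of $m_2$ to force $m_2a=m_1$ rather than merely $m_2a\in m_1R$), but it is appreciably heavier, and you should note that Theorems \ref{elif} and \ref{equiv-1} are stated for regular modules whereas Theorem \ref{hoca} assumes only that $m_1,m_2$ are regular --- their proofs use only the regularity of $m_1$, so the appeal is legitimate, but the hypothesis mismatch deserves a remark. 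What the paper's approach buys is brevity and complete independence from projectivity and summand-generated idempotents; what yours buys is an explicit Jones-style presentation $m_1=fm_2=m_2a$ with genuine idempotents, tying the result directly to Theorem \ref{equiv-1}.
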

\begin{proof} $(1) \Rightarrow (2)$: Let $m_1\leq^{-} m_2$.
There exists $\varphi\in M^*$ such that $m_1 =m_1\varphi
m_1$. By Remark \ref{burcu}, we have $M=m_1 R \oplus T$ where
$T=\{x \in M ~|~m_1\varphi x=0\}$. To prove $m_1\leq^{\oplus} m_2$, we will show that $m_2 R=m_1 R \oplus
(m_2 -m_1)R$. Let first $x\in m_2R$. Then $x=m_2r$ for some $r\in R$ and therefore
$$x=m_2r+m_1r-m_1r=m_1r+(m_2-m_1)r\in m_1 R +
(m_2 -m_1)R.$$
By Proposition \ref{subset}, $m_1R\subseteq m_2R$ which yields $(m_2-m_1)R\subseteq m_2R$. Thus,
$$m_1R+(m_2-m_1)R\subseteq m_2R.$$ Let $a\in R.$ Observe that $$m_1\varphi (m_2-m_1)a= m_1\varphi m_2 a - m_1\varphi m_1a =m_1\varphi m_1a - m_1\varphi m_1a =0$$ and therefore
$(m_2-m_1)R \subseteq T$. Since $M=m_1R\oplus T$, it follows that $m_1R\cap (m_2-m_1)R=\{0\}.$ Thus, $m_2 R=m_1 R \oplus
(m_2 -m_1)R$.

\noindent (2) $\Rightarrow$ (1): Let $m_1\leq^{\oplus}
m_2$, i.e. $m_2R= m_1R\oplus (m_2 - m_1)R$. There exist $\varphi_2 \in M^*$ and $r\in R$
with $m_2 = m_2\varphi_2m_2$ and $m_1 = m_2r$.
Then $m_2\varphi_2m_1 = m_2\varphi_2m_2r = m_2r = m_1$ and hence
$$m_1\varphi_2m_1 = m_2\varphi_2m_1 - (m_2 - m_1)\varphi_2m_1 = m_1
- (m_2 - m_1)\varphi_2m_1.$$ Thus $m_1\varphi_2m_1 - m_1 = -(m_2 -
m_1)\varphi_2m_1\in m_1R\cap (m_2 - m_1)R$. Since $m_1R\cap (m_2 -
m_1)R = \{0\}$, it follows $m_1\varphi_2m_1 = m_1$. Therefore $m_2\varphi_2m_1 =
m_1 = m_1\varphi_2m_1$. Let $x = \varphi_2m_1\varphi_2\in M^*$.
Then $m_1xm_1 = m_1$. Note that $m_2x = m_2\varphi_2m_1\varphi_2 =
m_1\varphi_2$ and $m_1x = m_1\varphi_2m_1\varphi_2 =
m_1\varphi_2$ and hence $m_1x = m_2x$. Also,
$m_2\varphi_2(m_2 - m_1) = m_2\varphi_2m_2  - m_2\varphi_2m_1 =
m_2 - m_1$, so  $$\begin{array}{lll}(m_2 - m_1)\varphi_2(m_2 -
m_1) &=& m_2\varphi_2(m_2 - m_1)- m_1\varphi_2(m_2 - m_1) \\ &=&
(m_2 - m_1) - m_1 \varphi_2(m_2 - m_1)\end{array}$$ and this
implies that $m_1\varphi_2(m_2 - m_1) = (m_2 - m_1) - (m_2 - m_1)
\varphi_2(m_2 - m_1)\in m_1R\cap (m_2 - m_1)R$. Since $m_1R\cap
(m_2 - m_1)R = 0$, we obtain $m_1\varphi_2m_2 = m_1\varphi_2m_1$ and $m_2 -
m_1 = (m_2 - m_1)\varphi_2(m_2 - m_1)$. From $m_1\varphi_2m_1 =
m_1$, it follows $m_1\varphi_2m_2 = m_1$. Now, $xm_2 =
\varphi_2m_1\varphi_2m_2 = \varphi_2m_1$ and $xm_1 =
\varphi_2m_1\varphi_2m_1 = \varphi_2m_1$. Thus $xm_1 = xm_2$ and
therefore $m_1\leq^{-} m_2$.
\end{proof}

The following result follows directly from Theorem \ref{hoca}.

\begin{cor} If $M$ is a regular module, then Definitions \ref{ilk} and \ref{DirectSumDef} are equivalent, i.e. the relation
$\leq^{\oplus}$ is the minus partial order on $M$.
\end{cor}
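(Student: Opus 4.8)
The plan is to deduce the corollary as an immediate consequence of Theorem~\ref{hoca}, using the fact that every element of a regular module is regular. So the first step is to observe that if $M$ is a regular module and $m_1, m_2 \in M$ are arbitrary, then in particular $m_1$ and $m_2$ are regular elements of $M$; hence the hypotheses of Theorem~\ref{hoca} are automatically satisfied for any pair $m_1, m_2 \in M$. Consequently Theorem~\ref{hoca} applies without restriction and gives, for all $m_1, m_2 \in M$, the equivalence $m_1 \leq^{-} m_2 \iff m_1 \leq^{\oplus} m_2$.

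Next I would spell out what this equivalence means at the level of definitions. Definition~\ref{ilk} defines the relation $\leq^{-}$ on $M$, and Definition~\ref{DirectSumDef} defines the relation $\leq^{\oplus}$ on $M$. The equivalence just established says precisely that these two relations coincide as subsets of $M \times M$. Since Theorem~\ref{ant} already tells us that $\leq^{-}$ is a partial order on the regular module $M$, it follows that $\leq^{\oplus}$ is the same partial order; in particular $\leq^{\oplus}$ is itself a partial order on $M$, and it equals the minus partial order. This is exactly the assertion of the corollary: ``Definitions~\ref{ilk} and~\ref{DirectSumDef} are equivalent, i.e. the relation $\leq^{\oplus}$ is the minus partial order on $M$.''

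There is essentially no obstacle here: the entire content of the corollary is packaged in Theorem~\ref{hoca}, and the only thing to check is the trivial point that regularity of the module removes the element-wise regularity hypotheses in that theorem. So the ``hard part,'' such as it is, is merely to state the logical reduction cleanly. A two-line proof suffices: invoke regularity of $M$ to guarantee that $m_1$ and $m_2$ are regular for every choice of $m_1, m_2 \in M$, then apply Theorem~\ref{hoca} and Theorem~\ref{ant}. I would not belabor it further, since the surrounding text (``The following result follows directly from Theorem~\ref{hoca}'') already signals that this is intended as a one-step corollary.

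\begin{proof}
Let $M$ be a regular module. Then every element of $M$ is regular, so for any $m_1, m_2 \in M$ both $m_1$ and $m_2$ are regular. Hence Theorem~\ref{hoca} applies to each pair $m_1, m_2 \in M$ and yields $m_1 \leq^{-} m_2$ if and only if $m_1 \leq^{\oplus} m_2$. Thus the relations $\leq^{-}$ and $\leq^{\oplus}$ on $M$ coincide, which means that Definitions~\ref{ilk} and~\ref{DirectSumDef} are equivalent. Since $\leq^{-}$ is a partial order on $M$ by Theorem~\ref{ant}, the relation $\leq^{\oplus}$ is the minus partial order on $M$.
\end{proof}
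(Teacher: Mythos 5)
Your proof is correct and matches the paper's intent exactly: the paper gives no explicit argument beyond the remark that the corollary ``follows directly from Theorem~\ref{hoca},'' and your observation that regularity of $M$ makes every pair $m_1,m_2$ satisfy the hypotheses of that theorem is precisely the intended one-step reduction. The additional appeal to Theorem~\ref{ant} to justify calling $\leq^{\oplus}$ a \emph{partial order} is a sensible (and harmless) extra sentence.
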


It can be seen that for nonzero elements, the minus order is the
identity relation on vector spaces. But this is not the case if
the module is regular as the following example shows.


\begin{ex}{\rm Consider $M=\Bbb Z_6$ as an $R=\Bbb Z_{30}$-module.
On the one hand, since $R\cong M\oplus \Bbb Z_5$, $M$ is a
projective $R$-module. On the other hand, $M$ is semisimple as an
$R$-module. Hence $M$ is a regular $R$-module. Let
$m_1=\overline{2}$, $m_2=\overline{5}\in M$. Then $m_2R=m_1R\oplus
(m_2-m_1)R$, i.e., $\overline{5}\Bbb Z_{30}=M=\overline{2}\Bbb
Z_{30}\oplus \overline{3}\Bbb Z_{30}$. This implies
$m_1\leq^{\oplus} m_2$. Therefore $m_1\leq^{-} m_2$ by Theorem
\ref{hoca}.}
\end{ex}

\noindent {\bf Acknowledgment.} The work is partially supported by
Slovene Research Agency, ARRS (Slovene-Turkish Grant
BI-TR/17-19-004) and by The Scientific and Technological Research
Council of Turkey, TUBITAK (Grant TUBITAK-116F435). The authors
wish to thank ARRS and TUBITAK for financial support.


\begin{thebibliography}{99}

\bibitem {BaksalaryMitra}J. K. Baksalary, S. K. Mitra, Left-star and
right-star partial orderings, Linear Algebra Appl. \textbf{ 149}
(1991), 73--89.

\bibitem {Baer}S. K. Berberian, Baer $\ast$-rings, Springer-Verlag, New York, 1972.



\bibitem{BJPS} B. Blackwood, S. K. Jain, K. M. Prasad, A. K. Srivastava, \textit{Shorted
operators relative to a partial order in a regular ring}, Commun.
Algebra \textbf{37} (2009), 4141--4152.



\bibitem {DjordjevicRakicMarovt}D. S. Djordjevi\'{c}, D. S. Raki\'{c}, J.
Marovt, \textit{Minus partial order in Rickart rings}, Publ. Math.
Debrecen \textbf{87} (2015), No. 3/4, 291--305.

\bibitem {DolinarGutermanMarovt}G. Dolinar, A. Guterman, J. Marovt, Monotone
transformations on $B(\mathcal{H})$ with respect to the left-star
and the right-star partial order, Math. Inequal. Appl. \textbf{17
}(2014), No. 2, 573--589.

\bibitem {Drazin}M. P. Drazin, Natural structures on semigroups with
involution, Bull. Amer. Math. Soc. \textbf{84} (1978), 139--141.


\bibitem {Hartwig}R. E. Hartwig, \textit{How to partially order regular elements},
Math. Japon. \textbf{25 }(1980), 1--13.

\bibitem {Higgins}P. M. Higgins, Techniques of the semigroup theory, Oxford
University Press, Oxford, 1992.

\bibitem {K}I. Kaplansky, Rings of Operators, W. A. Benjamin Inc., New
York-Amsterdam, 1968.



\bibitem {MarovtRakicDjodjevic}J. Marovt, D. S. Raki\'{c}, D. S.
Djordjevi\'{c}, Star, left-star, and right-star partial orders in
Rickart $\ast$-rings, Linear Multilinear Algebra \textbf{63}
(2015), No.2, 343--365.

\bibitem {Mitsch}H. Mitsch, \textit{A natural partial order for semigroups}, Proc.
Amer. Math. Soc. \textbf{97 }(1986), No. 3, 384--388.


\bibitem {Nashed}M. Z. Nashed (ed.), Generalized inverses and applications,
Academic Press, New York-London, 1976.

\bibitem{NY} W. K. Nicholson, M. F. Yousif, Quasi-Frobenius Rings,
Cambridge Tracts in Mathematics, 158, Cambridge University Press,
Cambridge, 2003.

\bibitem {Semrl}P. {\v{S}}emrl, \textit{Automorphisms of $B(\mathcal{H})$ with respect
to minus partial order}, J. Math. Anal. Appl. \textbf{369} (2010),
205--213.


\bibitem{ZEL}  J. Zelmanowitz, \textit{Regular modules}, Trans. Amer. Math. Soc. \textbf{163}
(1972), 341--355.
\end{thebibliography}
\end{document}